\pdfoutput=1
\documentclass[letterpaper]{amsart}
\usepackage{amssymb}
\usepackage{enumerate}
\usepackage[final]{hyperref}
\usepackage{microtype}

\parindent 20pt	
\parskip 1ex
\linespread{1}

\newtheorem{theorem}{Theorem}
\newtheorem{claim}{Claim}
\newtheorem{lemma}{Lemma}

\theoremstyle{definition}
\newtheorem{definition}{Definition}

\newcommand{\eps}		{\varepsilon}

\newcommand{\bA}		{\mathbb{A}}
\newcommand{\bB}		{\mathbb{B}}
\newcommand{\bC}		{\mathbb{C}}
\newcommand{\bD}		{\mathbb{D}}

\newcommand{\cA}		{\mathcal{A}}
\newcommand{\cB}		{\mathcal{B}}
\newcommand{\cC}		{\mathcal{C}}
\newcommand{\cS}		{\mathcal{S}}
\newcommand{\cF}		{\mathcal{F}}

\newcommand{\norm}[1]	{\lVert#1\rVert}

\renewcommand{\Pr}[2][]	{\mathbf{P}_{#1}\left( #2 \right)}

\newcommand{\E}		{\mathbf{E}}
\newcommand{\ind}[1]	{\mathbf{1}_{#1}}

\newcommand{\lam}		{\lambda_{\ast}}
\newcommand{\K}		{\kappa}

\DeclareMathOperator{\essinf}{ess\,inf}
\DeclareMathOperator{\esssup}{ess\,sup} 
\DeclareMathOperator{\diam}{diam}

\begin{document}

\title{The diameter of Inhomogeneous Random Graphs}

\author{Nicolas Fraiman}
\address{Department of Mathematics\\ Harvard University\\ One Oxford Street, Cambridge MA 02138, USA}
\email{fraiman@math.harvard.edu}

\author{Dieter Mitsche}
\address{Laboratoire J--A Dieudonn\'e\\ Universit\'e Nice Sophia Antipolis\\ Parc Valrose, 06108 Nice Cedex 02, France}
\email{dmitsche@unice.fr}

\subjclass[2010]{60C05, 05C80}
\keywords{random graphs, diameter, concentration inequalities, neighborhood expansion}
\thanks{N.F. was partially supported by an NSERC Postdoctoral Fellowship and the Department of Mathematics at University of Pennsylvania.} 

\date{\today}

\maketitle

\begin{abstract}
In this paper we study the diameter of inhomogeneous random graphs $G(n,\K,p)$ that are induced by irreducible kernels $\K$. The kernels we consider act on separable metric spaces and are almost everywhere continuous. We generalize results known for the Erd\H{o}s-R\'enyi model $G(n,p)$ for several ranges of $p$.  We find upper and lower bounds for the diameter of $G(n,\K,p)$ in terms of the expansion factor and two explicit constants that depend on the behavior of the kernel over partitions of the metric space.
\end{abstract}

\section{Introduction}\label{sec:intro}

In this work we study metric properties of inhomogeneous random graphs, where edges are present independently but with unequal edge occupation probabilities. We study the behavior of the diameter for different ranges of the mean edge density. Under weak assumptions we find tight asymptotic bounds of the diameter for connected graphs in this random graph model.

Let $\cS$ be a separable metric space and $\mu$ a Borel probability measure on $\cS$. Let $\K:\cS\times\cS \to [0,1]$ be a measurable symmetric kernel. The \emph{inhomogeneous random graph} with kernel $\K$ and density parameter $p$ (depending on $n$) is the random graph $G(n,\K, p) = (V,E)$ where the vertex set is $V = \{1,\dots, n\}$ and we connect each pair of vertices $i,j\in V$ independently with probability $p_{ij} = \K(X_i,X_j)p$, where  $X_1,\dots,X_n$ are independent $\mu$-distributed random variables on $\cS$. 

We study the asymptotic expansions for distances in the graph $G(n,\K, p)$ by associating to $G(n,\K, p)$ two graphs induced by the kernel $\K$. Given two subsets $\cA,\cB\subset \cS$, let 
\begin{align*}
\K_\ell(\cA,\cB) &= \essinf \{\K(x,y): x\in \cA, y\in \cB\}, \\
\K_u(\cA,\cB) &= \esssup \{\K(x,y): x\in \cA, y\in \cB\}.
\end{align*}
For a partition $\bA=\{\cA_1,\dots,\cA_m\}$ of $\cS$, we define the \emph{lower partition graph} $P_\ell(\bA)$ induced by $\bA$ as the graph with vertex set $\bA$ and where $(\cA_i,\cA_j)$ is an edge if $\K_\ell(\cA_i, \cA_j)> 0$. Analogously, we define the \emph{upper partition graph} $P_u(\bA)$  as the graph with vertex set $\bA$ and where $(\cA_i,\cA_j)$ is an edge if  $\K_u(\cA_i, \cA_j)> 0$. 

A kernel $\K$ on $(\cS,\mu)$ is \emph{reducible} if there exists a set $\cA \subset \cS$ with $0 < \mu(\cA) < 1$ such that $\K = 0$ almost everywhere on $\cA \times (\cS\setminus\cA)$. Otherwise $\K$ is \emph{irreducible}.

Throughout the paper we always assume that $n$ is sufficiently large. We say that a sequence of events holds \emph{with high probability}, if it holds with probability tending to $1$ as $n\to\infty$. Since we are only interested in results that hold with high probability, we avoid working with partitions that have measure zero sets. Therefore,  every time we refer to a partition we assume that is has no measure zero sets.

For two vertices $u,v \in V$ belonging to the same connected component of a graph $G=(V,E)$, denote by $d_G(u,v)$ the graph distance between $u$ and $v$, that is, the number of edges on a shortest path between them. For a connected graph $G$, let $\diam G=\max_{u,v} d_G(u,v)$. 
We study the diameter of $G(n,\K, p)$ by studying the diameters of the induced graphs $P_\ell(\bA)$  and $P_u(\bA)$. 

Our main result is then stated in terms of the following two constants
\[
\Delta_\ell := \inf_\bA \diam P_\ell(\bA)
\qquad\text{and}\qquad
\Delta_u := \sup_\bA \diam P_u(\bA),
\]
where $\bA$ ranges over all partitions with no measure zero sets.  In Section \ref{sec:partitions} we show our first result:
\begin{theorem}\label{thm:diff2}
Suppose $\Delta_\ell < \infty$. Then
\[
\Delta_u \leq \Delta_\ell \leq \Delta_u +2,
\]
both bounds being tight.
\end{theorem}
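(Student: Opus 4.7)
The plan is to establish the two inequalities separately, the first by a path-lifting argument and the second by a careful refinement of the supremum-attaining partition. For $\Delta_u \leq \Delta_\ell$, I would prove the pointwise bound $\diam P_u(\bA) \leq \diam P_\ell(\bB)$ for any two partitions $\bA,\bB$ and then take sup over $\bA$ and inf over $\bB$. Fix cells $\cA_i,\cA_j \in \bA$; pick $\cB_0,\cB_s \in \bB$ with $\mu(\cA_i \cap \cB_0), \mu(\cA_j \cap \cB_s) > 0$ (possible since $\bB$ partitions $\cS$), and let $\cB_0,\cB_1,\ldots,\cB_s$ be a shortest $P_\ell(\bB)$-path, so $s \leq \diam P_\ell(\bB)$. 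For each intermediate $t$ pick $\cA_{i_t} \in \bA$ with $\mu(\cA_{i_t} \cap \cB_t) > 0$, setting $\cA_{i_0} = \cA_i$ and $\cA_{i_s} = \cA_j$. Each $P_\ell(\bB)$-edge $(\cB_t,\cB_{t+1})$ furnishes $c_t > 0$ with $\K \geq c_t$ a.e.\ on $\cB_t \times \cB_{t+1}$; by Fubini this persists on the positive-measure rectangle $(\cA_{i_t} \cap \cB_t) \times (\cA_{i_{t+1}} \cap \cB_{t+1})$, so $\K_u(\cA_{i_t},\cA_{i_{t+1}}) > 0$ and we get an edge in $P_u(\bA)$. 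Collapsing any repetitions produces a walk in $P_u(\bA)$ from $\cA_i$ to $\cA_j$ of length at most $s$.

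For $\Delta_\ell \leq \Delta_u + 2$, the first inequality gives $\Delta_u \leq \Delta_\ell < \infty$, and since diameters are integer-valued, $\Delta_u$ is attained by some $\bA$. I would construct a refinement $\bB$ of $\bA$ with $\diam P_\ell(\bB) \leq \diam P_u(\bA) + 2$ by designating inside each $\cA_i$ a positive-measure \emph{core} $\cA_i^* \subset \cA_i$ such that $\K_\ell(\cA_i^*, \cA_j^*) > 0$ whenever $(\cA_i,\cA_j)$ is a $P_u(\bA)$-edge. Existence of cores leverages the almost-everywhere continuity of $\K$: whenever $\K_u(\cA_i,\cA_j) > 0$, there is a continuity point $(x_0,y_0) \in \cA_i \times \cA_j$ with $\K(x_0,y_0) > 0$, about which a small product ball cuts out positive-measure subsets on which $\K$ is essentially bounded below. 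Doing this over the finitely many neighbor pairs and intersecting the candidate sets yields $\cA_i^*$. Taking $\bB = \{\cA_i^*\}_i \cup \{\cA_i \setminus \cA_i^*\}_i$, the cores inherit all $P_u(\bA)$-edges in $P_\ell(\bB)$, giving diameter at most $\diam P_u(\bA)$ on the core subgraph; each residual can be arranged to sit at $P_\ell$-distance one from its own core, accounting for the extra $+2$.

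The principal technical obstacle is the joint construction of the cores: a single $\cA_i^*$ must be simultaneously compatible with every $P_u(\bA)$-neighbor of $\cA_i$, whereas a naive per-neighbor choice may be mutually incompatible. The fix exploits the finiteness of $\bA$ and regularity of $\mu$, intersecting finitely many positive-measure "good" subsets while shrinking their radii via the a.e.\ continuity of $\K$ whenever necessary; a symmetric shrinking argument also ensures that each residual connects to its core by a $P_\ell(\bB)$-edge. For tightness, the constant kernel $\K \equiv 1$ realizes $\Delta_u = \Delta_\ell$, while a kernel whose essential support is path-shaped with two buffer regions at its ends forces the full $+2$ gap.
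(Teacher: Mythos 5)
Your first inequality is fine: the path-lifting argument (transferring a shortest $P_\ell(\bB)$-path to a $P_u(\bA)$-walk through the positive-measure intersections $\cA_{i_t}\cap\cB_t$) is a valid alternative to the paper's route, which instead passes through the common refinement $\bA\vee\bB$ and the monotonicity of $\diam P_u$ and $\diam P_\ell$ under refinement; both yield $\diam P_u(\bB)\le\diam P_\ell(\bA)$ for all $\bA,\bB$.

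The second inequality has a genuine gap, and it sits exactly where you flag the ``principal technical obstacle'': the simultaneous core construction fails, and the proposed fix (shrinking radii) cannot repair it. For each $P_u(\bA)$-edge $(\cA_i,\cA_j)$, continuity gives a witness pair $(x_0,y_0)$ and small balls about it; but for two different neighbors $\cA_j$ and $\cA_k$ of the same cell $\cA_i$, the witness points inside $\cA_i$ may lie in disjoint regions (e.g.\ $\K$ positive only on $[0,0.1]\times\cA_j$ and on $[0.2,0.3]\times\cA_k$ within $\cA_i\times\cS$). Then no single positive-measure core $\cA_i^*$ can satisfy $\K_\ell(\cA_i^*,\cA_j^*)>0$ and $\K_\ell(\cA_i^*,\cA_k^*)>0$ simultaneously, no matter how radii are shrunk: balls about distinct centers have empty intersection once the radii are small. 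This incompatibility of witness sets along a $P_u$-path is precisely the difficulty the theorem must overcome, and it is why the paper's proof is an iterative refinement argument (Claim~\ref{claim1}): for a \emph{fixed} pair of cells one either finds a genuine point-level path $x_1,\dots,x_r$ with $\K(x_t,x_{t+1})>0$ and fattens it into balls, or one splits each intermediate cell into the set reachable from the start and the set reaching the end; these are disjoint, so the split strictly increases the distance in the upper partition graph, which can happen only finitely often since $\Delta_u<\infty$. Separately, your assertion that ``each residual connects to its core by a $P_\ell(\bB)$-edge'' is unjustified: nothing bounds $\K$ below almost everywhere on $(\cA_i\setminus\cA_i^*)\times\cA_i^*$. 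The paper instead obtains the $+2$ from two edges of the \emph{original} lower partition graph (neighbors $\cB_s$ of $\cB_i$ and $\cB_f$ of $\cB_j$, which exist because $\diam P_\ell(\bB)<\infty$), and these edges persist under refinement because $\K_\ell$ can only increase on subsets. As written, your argument would in fact prove too much --- essentially $\Delta_\ell\le\Delta_u+2$ with all $P_u$-edges realized in $P_\ell$ of a refinement --- which the paper's own tightness example (where every refinement loses the long-range upper edges) shows is impossible.
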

In order to introduce our next theorem we need to  define the \emph{expansion factor} 
\[
\Phi := \left\lceil \frac{\log n }{ \log np} \right\rceil.
\] 
This quantity is about the diameter of $G(n,p)$, where $G(n,p)$ is the Erd\H{o}s-R\'enyi graph, as first shown  by \cite{Bol81}. In order to simplify our main result (see below for more details), we will assume that $\log n/\log np$ is at least $2\log \log n/\log np$ away from every integer. 

Our main result is the following:
\begin{theorem}\label{thm:main} 
Let $\K$ be an irreducible kernel that is continuous $(\mu\otimes\mu)$-almost everywhere and let $G(n,\K, p)$ be the induced inhomogeneous random graph. With high probability, the following statements hold:
\begin{enumerate}[(i)]
\item If $\Phi <\Delta_u$, then \[ \Delta_u \leq \diam G(n,\K,p) \leq \Delta_\ell. \] 

\item If $\Delta_u \leq  \Phi < \Delta_\ell$, then  \[ \Phi \leq \diam G(n,\K,p) \leq \Delta_\ell. \] 
Moreover,  if for every  partition $\bA$ there exist $\cA_i$ and $\cA_j$, with no walk of length exactly $\Phi$ between them in $P_u(\bA)$, then \[ \Phi+1 \leq \diam G(n,\K,p). \] 
\item If $ \Delta_\ell \leq \Phi$, then  \[ \Phi  \leq \diam G(n,\K,p) \leq \Phi+1. \] 
	Moreover, \[ \diam G(n,\K,p) = \Phi +1, \] iff for every  partition $\bA$ there exist $\cA_i$ and $\cA_j$, with no walk of length exactly $\Phi$ between them in $P_u(\bA)$. 
\end{enumerate}
\end{theorem}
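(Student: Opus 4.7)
My plan is to combine three ingredients: (a) geometric upper bounds via BFS expansion along a partition that realises $\Delta_\ell$; (b) geometric lower bounds via projection of $G$-walks onto the upper partition graph; and (c) the standard Erd\H{o}s--R\'enyi path-counting and expansion arguments adapted to account for kernel inhomogeneity.

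For the upper bounds $\diam G \le \Delta_\ell$ in (i), (ii) and $\diam G \le \Phi + 1$ in (iii), I would fix a partition $\bA^\ast = \{\cA_1, \dots, \cA_m\}$ with $\diam P_\ell(\bA^\ast) = \Delta_\ell$ and set $V_i = \{k : X_k \in \cA_i\}$. Concentration gives $|V_i| = (1+o(1))\, n\, \mu(\cA_i) = \Theta(n)$ w.h.p., and on each edge $(\cA_i, \cA_j)$ of $P_\ell(\bA^\ast)$ we have $\K_\ell(\cA_i, \cA_j) \ge c > 0$, so potential $G$-edges between $V_i$ and $V_j$ are present independently with probability at least $cp$. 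In regimes (i) and (ii), a Chernoff bound ensures every vertex of $V_i$ has a neighbor in $V_j$; traversing a shortest $P_\ell(\bA^\ast)$-path step by step yields $\diam G \le \Delta_\ell$. In regime (iii), I would instead grow BFS balls from both endpoints for roughly $\Phi/2$ steps each; a neighborhood-expansion lemma (branching-process comparison with mean offspring of order $np$) shows that the product of the two ball sizes exceeds $cn$ w.h.p., so at least one cross edge exists, yielding $\diam G \le \Phi + 1$.

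For the geometric lower bound $\diam G \ge \Delta_u$, I would pick a partition $\bA^\ast$ with $\diam P_u(\bA^\ast) = \Delta_u$ and parts $\cA_i, \cA_j$ at distance $\Delta_u$. Any edge $\{u,v\} \in E(G)$ with $X_u \in \cA_a$, $X_v \in \cA_b$ forces $\K_u(\cA_a, \cA_b) > 0$, so every $G$-path projects to a $P_u(\bA^\ast)$-walk; hence $d_G(u,v) \ge \Delta_u$ for any $u \in V_i, v \in V_j$, both non-empty w.h.p. The probabilistic lower bound $\diam G \ge \Phi$ is the path-counting argument: the expected number of walks of length less than $\Phi$ between two fixed vertices is $o(1)$ (using $\K \le 1$ and the ``$2\log\log n / \log np$ away from an integer'' hypothesis), and a standard ball-size computation shows the ball of radius $\Phi-1$ around a typical vertex has size $o(n)$, producing pairs at distance $\ge \Phi$.

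The delicate step is the $+1$ refinement. To obtain $\diam G \ge \Phi + 1$ (the addendum in (ii) and the ``$\Leftarrow$'' direction of the iff in (iii)), I would fix any partition $\bA$ and, by hypothesis, parts $\cA_i, \cA_j$ with no $P_u(\bA)$-walk of length exactly $\Phi$; the projection principle rules out a $G$-walk of length exactly $\Phi$ between $V_i$ and $V_j$, and combining with the probabilistic lower bound (refined to place pairs in $V_i \times V_j$ specifically at distance $\ge \Phi$, using that $|V_j| = \Theta(n)$ dwarfs the $o(n)$ ball) forces some such pair at distance $\ge \Phi + 1$. Conversely, for the ``$\Rightarrow$'' direction in (iii), if some partition $\bA$ admits a $P_u(\bA)$-walk of length exactly $\Phi$ between every pair of parts, I would refine $\bA$ into a finer partition $\bA'$ on which $\K_\ell$ is bounded below along each chosen walk --- using $(\mu \otimes \mu)$-a.e.\ continuity of $\K$ to locate positive-measure rectangles around points of positivity --- and realise each walk as a $G$-path of length $\Phi$ via the BFS step argument of the upper-bound proof. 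The main obstacle is exactly this conversion from $P_u$-walks, where only essential suprema of $\K$ are positive, to actual $G$-paths, which require positive essential infima; the a.e.\ continuity hypothesis on $\K$ is what lets the partition be refined without losing the walk structure.
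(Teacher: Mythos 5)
Your overall architecture matches the paper's: projection of $G$-walks onto $P_u(\bA)$ for the geometric lower bounds, neighborhood expansion along walks in a partition attaining $\Delta_\ell$ for the upper bounds, Erd\H{o}s--R\'enyi-type arguments for the bounds involving $\Phi$, and the ``no walk of length exactly $\Phi$ in $P_u(\bA)$'' condition for the $+1$ refinements. Two points in your sketch, however, are genuine gaps rather than routine details.

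First, the upper bound in regimes (i) and (ii). The claim that ``every vertex of $V_i$ has a neighbor in $V_j$'' lets you traverse a $P_\ell(\bA^\ast)$-path step by step only bounds $d_G(u, V_j)$, the distance from $u$ to the \emph{set} $V_j$; it says nothing about reaching the specific target $v\in V_j$, and hitting a prescribed vertex with a single last edge has probability only $O(p)=o(1)$. Worse, for pairs $u,v$ lying in the same part (or in parts at small $P_\ell$-distance) the step-by-step traversal gives no bound at all, yet these pairs can still be at $G$-distance up to $\Phi$ or $\Phi+1$. The paper's Lemma \ref{lem:DeltaMinusUB} resolves this in \emph{all} regimes by the meet-in-the-middle device you reserve for (iii): grow $\Gamma_{\Phi-2}(u)$ or $\Gamma_{\Phi-1}(u)$ along a $P_\ell$-walk ending at a part $\cA_r$ with $\cA_r,\cA_s,\cA_j$ a walk of length $2$, grow one step from $v$ into $V(\cA_s)$, and close with a single edge between two sets whose sizes multiply to $\gg n\log n/(np)$; a case analysis on the available walk lengths ($\Phi-2$, $\Phi-1$, or some $\ell$ with $\Phi\le\ell\le\Delta_\ell-2$) then gives a path of length at most $\Delta_\ell$ for every pair. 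You need this case analysis, not the step-by-step traversal.

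Second, the ``$\Rightarrow$'' direction of the iff in (iii). You correctly identify the conversion from $P_u$-walks to $P_\ell$-walks of a refinement as the crux, but ``locate positive-measure rectangles around points of positivity'' is only the easy half. A walk of length exactly $\Phi$ in $P_u(\bA)$ guarantees only that $\K$ has positive essential supremum on each consecutive rectangle; there need not exist an actual chain of points $x_0,\dots,x_\Phi$ of continuity with $\K(x_k,x_{k+1})>0$ realizing that walk. The paper's Claim \ref{claim1} handles this by a contradiction-and-iteration argument: if no such chain exists, one can split each part along ``reachable from the start'' versus ``co-reachable from the end'' subsets, producing a refinement in which the $P_u$-distance strictly increases; since $\Delta_u=\sup_\bA\diam P_u(\bA)<\infty$, this process must terminate with a genuine continuity chain. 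Your sketch omits this mechanism, and without it the conversion (and hence Lemma \ref{lem:PhiUB}) does not go through; note also that the length-exactly-$\Phi$ constraint must be preserved throughout the refinement. On the positive side, your route to the refined lower bound $\diam G\ge\Phi+1$ --- a first-moment/ball-size computation for a fixed pair in $V(\cA_i)\times V(\cA_j)$ combined with the projection principle --- is legitimately different from the paper's sprinkling argument and is in fact simpler: under the standing hypothesis that $\log n/\log np$ is $2\log\log n/\log np$ away from every integer one has $(np)^{\Phi-1}\le n/\log^2 n=o(n)$, so the expected ball of radius $\Phi-1$ around a fixed $u$ misses most of $V(\cA_j)$, which is all you need.
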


\subsection{Background and history}
A discrete version of this model was introduced by S{\"o}derberg \cite{Sod02}. The sparse case (when the number of edges is linear in the number $n$ of vertices) was studied in detail in by Bollob\'as, Janson and Riordan \cite{BJR07}. Among other things they give an asymptotic formula for the diameter of the giant component when it exists. Connectivity at the intermediate case was analyzed by Devroye and Fraiman \cite{DF14}. The dense case (when the number of edges is quadratic in $n$) is closely related with the theory of graph limits started by Lov\'asz and Szegedy \cite{LS06} and further studied in depth by Borgs, Chayes, Lov\'asz, S\'os and Vesztergombi \cite{BCLSV08,BCLSV12} among others. For a thorough introduction to the subject of graph limits see the book by Lov\'asz \cite{Lov12}.

The diameter of random graphs has been studied widely. In particular, for the Erd\H{o}s-R\'enyi model, Bollob\'as \cite{Bol81} generalized the results from Klee and Larman \cite{KL81} characterizing the case of constant diameter. Later, Chung and Lu \cite{CL01} proved concentration results in various different ranges. More recently, Riordan and Wormald \cite{RW10} completed the program to study the missing cases for the Erd\H{o}s-R\'enyi model.

The critical window, when $p = 1/n+c n^{-4/3}$, for $G(n,p)$ is much harder to analyze. Nachmias and Peres \cite{NP08} obtained the order of the diameter, namely $n^{1/3}$. Addario--Berry, Broutin and Goldschmidt \cite{ABG10,ABG12} proved convergence, in the Gromov--Hausdorff distance, of the rescaled connected components to a sequence of continuous compact metric spaces. In particular, the diameter rescaled by $n^{-1/3}$ converges in distribution to an absolutely continuous random variable with finite mean. Their approach was extended by Bhamidi, Sen and Wang \cite{BSW14} to the Norros--Reittu \cite{NR06} random graph model, and then further generalized by Bhamidi, Broutin, Sen and Wang \cite{BBSW14}.

\subsection{Structure of the paper} 
In Section \ref{sec:frame} we introduce all concepts, additional definitions and results needed to prove Theorem \ref{thm:main}. In Section \ref{sec:partitions} we prove Theorem~\ref{thm:diff2} on the behavior of the upper and lower diameters $\Delta_u$ and $\Delta_\ell$. In Section  \ref{sec:expansion} we prove that the number  of vertices that are at a fixed distance from a given vertex grows exponentially as a function of the distance. Finally, in Section \ref{sec:diameter} we combine the results of the previous sections to give the proof of Theorem \ref{thm:main}.

\section{Framework}\label{sec:frame}

In this paper we follow the notation from \cite{BJR07} with minor changes. We also use the following standard notation: we write $f =O(g)$ if $f/g$ is bounded and $f = o(g)$ if $f/g\to 0$.

Define $N(u) = \{v\in V: (u,v)\in E\}$ the neighborhood of vertex $u$, and for a subset $U\subset V$ let $N(U) = \cup_{u\in U} N(u)$. 
 Given a subset $\cA \subset \cS$ we write $V(\cA)$ for the set of vertices with type in $\cA$, i.e.,
\[
V(\cA) = \left\{v\in V: X_v \in \cA \big.\right\}.
\]

The asymptotic expansions for distances in the graph $G(n,\K,p)$ are obtained by looking at the lower and upper partition graphs $P_\ell(\bA)$  and $P_u(\bA)$ of a partition $\bA=\{\cA_1,\dots,\cA_m\}$ of $\cS$, as defined in the introduction.  These graphs are finite graphs that describe approximations of $\K$ that may be successively refined. More formally, we have the following definition:

\begin{definition}\label{def:refinement}
We say that a partition $\bA$ is a \emph{refinement} of $\bB$, denoted by $\bA \prec \bB$, if for every $\cA\in\bA$ there exists $\cB\in\bB$ such that $\cA\subset\cB$. Note that in this case, each $\cB_i = \cup_{p=1}^{m_i} \cA_p^{(i)}$ $\mu$-almost everywhere.
\end{definition}
 
Let us examine the effect of a refinement on the partition graphs:  
It is clear that $\K_u(\cB_i,\cB_j) > 0$ if and only if there exists $p,q$ and $\cA_i, \cA_j$ with $\cA_i\subset \cB_i$ and $\cA_j \subset \cB_j$ such that $\K_u(\cA_i,\cA_j) > 0$. This implies that $P_u(\bB)$ is obtained from $P_u(\bA)$ by contracting the vertices $\cA_i \subset \cB_i$ into one vertex $\cB_i$. In particular, 
\[
\diam P_u(\bB) \leq \diam P_u(\bA),
\] 
On the other hand, $\K_\ell(\cB_i,\cB_j) > 0$ if and only if for all $\cA_i\subset \cB_i$ and $\cA_j \subset\cB_j$ we have $\K_\ell(\cA_i,\cA_j) > 0$. This implies that the graph obtained by splitting each vertex $\cB_i$ into the parts of $\bA$ that it contains is a subgraph of $P_\ell(\bA)$. In particular, 
\[
\diam P_\ell(\bB) \geq \diam P_\ell(\bA).
\]
Note also that if $\bA \prec \bB$ and $\cA_i \subset \cB_i$ and $\cA_j \subset \cB_j$, we always have 
\begin{align*}
d_{P_\ell(\bA)} (\cA_i,\cA_j) \leq d_{P_\ell(\bB)} (\cB_i,\cB_j), \\
d_{P_u(\bA)} (\cA_i,\cA_j) \geq d_{P_u(\bB)} (\cB_i,\cB_j).
\end{align*}
By the above properties it is clear that the values of $\Delta_\ell = \inf_\bA \diam P_\ell(\bA)$ and $\Delta_u = \sup_\bA \diam P_u(\bA)$ are well defined.
When studying $\Delta_\ell$ and $\Delta_u$  we want to avoid trivial cases where they are infinite because there is an structural obstruction for connectivity given by $\K$. If $\K$ is reducible then the whole graph $G(n,\K,p)$ is disconnected since almost surely there are no edges between the sets $V(\cA)$ and $V(\cS\setminus\cA)$. Since we want to work with connected graphs, we restrict our attention to the irreducible case. Let
\[
\lambda(x) = \int_\cS \K(x,y) d\mu(y) \qquad\text{and}\qquad \lam = \essinf \lambda(x).
\]
In \cite{DF14} the constant $\lam$ is called the \emph{isolation parameter} and it is shown to determine the connectivity threshold. Note that the expected degree of a vertex of type $x$ is $\lambda(x)np$. Our arguments require this value to be $\Theta(np)$ so we will work with kernels that satisfy $\lam > 0$. Observe that the assumption $\lam > 0$ implies that all lower partition graphs $P_\ell(\bA)$ are connected (the upper partition graphs are connected anyway, since $\K$ is assumed to be irreducible). 

Recall that we defined the expansion factor as $\Phi = \lceil \log n/\log np \rceil.$ Its relation to the diameter of the Erd\H{o}s and R\'{e}nyi model $G(n,p)$ is made precise now. Since we use it in our argument we include the following result obtained first by~\cite{Bol81} as a Lemma.

\begin{lemma}[\cite{Bol01}, Corollary 10.12]\label{lem:diameter} 
Let $k \geq 2$ be finite. If $(np)^k/n -2 \log n \to \infty$ and $(np)^{k-1}/n - 2\log n \to -\infty$. Then, with high probability $\diam(G(n,p))=k$.
\end{lemma}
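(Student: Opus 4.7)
The plan is to establish the lemma via the first and second moment methods applied to counts of pairs of vertices that are far apart in $G(n,p)$. Fix $k \geq 2$ as in the hypothesis; since $(np)^k / n \to \infty$ and $k$ is bounded, we have $np \to \infty$, so the usual BFS-expansion concentration tools are available.

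For the upper bound $\diam G(n,p) \leq k$, I would fix an arbitrary pair $(u,v)$ and carry out a two-sided breadth-first exploration: reveal the ball $B_u$ of radius $r_1 := \lfloor (k-1)/2 \rfloor$ around $u$ and then, using only edges not yet examined, the ball $B_v$ of radius $r_2 := \lceil (k-1)/2 \rceil$ around $v$. A Chernoff bound applied layer by layer yields $|B_u| \geq (1-o(1))(np)^{r_1}$ and similarly for $B_v$, except on an event of probability $o(n^{-2})$. Conditional on the two balls, any edge between them certifies $d(u,v) \leq r_1 + 1 + r_2 = k$, hence
\[
\Pr{d(u,v) > k} \leq (1-p)^{|B_u|\,|B_v|} \leq \exp\!\bigl(-(1-o(1))\,(np)^k/n\bigr).
\]
A union bound over all $\binom{n}{2}$ pairs, together with $(np)^k/n - 2\log n \to \infty$, makes the expected number of pairs at distance greater than $k$ tend to zero.

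For the lower bound $\diam G(n,p) \geq k$, let $Y$ count ordered pairs $(u,v)$ with $d(u,v) > k-1$. Performing the same two-sided BFS, now with radii summing to $k-2$, and using the lower tail instead of the upper tail to bound $|B_u|$, $|B_v|$ from above, I would obtain
\[
\Ex{Y} \geq n^2 \exp\!\bigl(-(1+o(1))(np)^{k-1}/n\bigr),
\]
which tends to infinity by $(np)^{k-1}/n - 2\log n \to -\infty$. To upgrade this to $Y \geq 1$ with high probability I would use Chebyshev's inequality, estimating $\Ex{Y^2}$ by partitioning ordered pairs of pairs according to whether their revealed BFS neighborhoods intersect: disjoint pairs contribute $(1+o(1))\Ex{Y}^2$ by near-independence, while intersecting pairs contribute a lower order term since a typical vertex lies in the radius-$r$ ball of only $(np)^r = o(n)$ other vertices.

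The main obstacle is keeping the BFS-expansion concentration sharp enough to recover the exact exponent $(np)^k/n$: one must track that $|B_u \cup B_v| = o(n)$ at every level so that the shrinkage factor $(1-|B|/n)$ from the hypergeometric-type resampling remains $1-o(1)$, and one must verify that the edges already revealed inside $B_u$ and $B_v$ contribute only a negligible multiplicative correction to the disconnection probability. Once those technicalities are handled the parity split of $k-1$ between $r_1$ and $r_2$ is cosmetic, and the second moment bound for the Chebyshev step is routine.
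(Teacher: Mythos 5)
This lemma is not proved in the paper at all: it is imported verbatim as Corollary 10.12 of Bollob\'as's book (the \cite{Bol01} reference), so there is no in-paper argument to compare against. Your outline is, in substance, the standard proof of that corollary: an upper bound by a union bound over pairs using two-sided neighborhood expansion plus one bridging edge, and a lower bound by a first/second moment computation on the count of far-apart pairs. That is the right route and it does work for fixed finite $k$.

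The one step you should not wave through is the handling of the $(1\pm o(1))$ factors sitting in the \emph{exponents}. The hypotheses only give $(np)^k/n - 2\log n \to \infty$ and $2\log n - (np)^{k-1}/n \to \infty$ at an arbitrarily slow rate, so a generic multiplicative error of size $(1+o(1))$ in an exponent of order $\log n$ contributes an additive $o(1)\cdot\log n$, which can swamp the gap: as literally written, ``$\Ex{Y}\geq n^2\exp(-(1+o(1))(np)^{k-1}/n)\to\infty$'' is a non sequitur, and the same issue afflicts the union bound $n^2\exp(-(1-o(1))(np)^k/n)\to 0$. The fix is quantitative rather than conceptual: for fixed $k$ the hypothesis $(np)^k \geq 2n\log n$ forces $np \geq n^{\Omega(1)}$, so the layer-by-layer Chernoff errors in the BFS are polynomially small, the accumulated relative error in $|B_u|\,|B_v|$ is $n^{-\Omega(1)}$, and (after splitting into the case $(np)^k/n \leq 4\log n$ and its complement) the perturbation of each exponent is genuinely $o(1)$ additively; likewise in the lower bound the corrections $(np)^{k-2}/n$ and $(np)^k/n^2$ are $o(1)$. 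You gesture at exactly this issue in your final paragraph, so I would count the proposal as a correct skeleton of the classical argument, with the caveat that the exponent bookkeeping must be done at this additive-$o(1)$ precision and the Chebyshev/overlap estimate, while standard, is where most of the remaining work lives.
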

Since we assume that $\log n/\log np$ is at least $2\log \log n/\log np$ away from every integer, clearly, the diameter of $G(n,p)$ is with high probability concentrated on one value, and hence clearly $k=\Phi$. In fact, this assumption is stronger and implies that
\begin{equation}\label{eq:diambounds}
\frac{(np)^{\Phi}}{n} -\omega \log n \to \infty 
\qquad\mbox{and}\qquad
\frac{(np)^{\Phi -1}}{n} - \frac{1}{\omega}\log n \to -\infty.
\end{equation}
The results hold also for the ranges in between with the obvious changes. Since this is handled as in $G(n,p)$, we focus on the one--value case for the sake of clarity.

\section{Upper and lower diameters}\label{sec:partitions} 

In this section we study the behavior of the diameters $\Delta_u$ and $\Delta_\ell$. The goal is to prove Theorem~\ref{thm:diff2}. We split the proof into two lemmas. Given two partitions $\bA$ and $\bB$, we define their \emph{common refinement} as
\[
\bA \vee \bB := \{\cA\cap\cB: \mu(\cA\cap\cB) > 0, \text{ for } \cA\in\bA \text{ and } \cB\in\bB \}.
\]

\begin{lemma}\label{lem:usmallerl}
If $\Delta_\ell < \infty$, then $\Delta_u \leq \Delta_\ell$.
\end{lemma}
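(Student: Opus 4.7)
My plan is a common-refinement argument. I will show that for any two partitions $\bA$ and $\bB$ of $\cS$,
\[
\diam P_u(\bA) \leq \diam P_\ell(\bB).
\]
Taking the supremum over $\bA$ on the left and the infimum over $\bB$ on the right (using $\Delta_\ell<\infty$ to ensure the right-hand side can be made finite) then yields $\Delta_u \leq \Delta_\ell$.

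To prove the display, fix $\bA$ and $\bB$ and set $\bC := \bA \vee \bB$, their common refinement, so that $\bC \prec \bA$ and $\bC \prec \bB$. I will chain together three inequalities that all funnel through $\bC$. First, by the monotonicity of $\diam P_u$ under refinement recorded in Section~\ref{sec:frame} (refining from $\bA$ to the finer $\bC$ cannot decrease $\diam P_u$, because $P_u(\bA)$ is the contraction of $P_u(\bC)$), one has $\diam P_u(\bA) \leq \diam P_u(\bC)$. Second, by the analogous monotonicity for $\diam P_\ell$ (splitting the pieces only enlarges $\K_\ell$ on the sub-pieces and so only adds edges), one has $\diam P_\ell(\bC) \leq \diam P_\ell(\bB)$. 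Third, on the single partition $\bC$ the essential inequality $\K_\ell \leq \K_u$ shows that every edge of $P_\ell(\bC)$ is already an edge of $P_u(\bC)$, so $P_\ell(\bC)$ is a spanning subgraph of $P_u(\bC)$ on the same vertex set, giving $\diam P_u(\bC) \leq \diam P_\ell(\bC)$. Concatenating,
\[
\diam P_u(\bA) \leq \diam P_u(\bC) \leq \diam P_\ell(\bC) \leq \diam P_\ell(\bB),
\]
as required.

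The step that needs care is the middle monotonicity $\diam P_\ell(\bC) \leq \diam P_\ell(\bB)$: if $P_\ell(\bB)$ were disconnected, its diameter would be $+\infty$ and the inequality would be vacuous. This is exactly where the hypothesis $\Delta_\ell<\infty$ enters: it guarantees the existence of partitions $\bB$ with $\diam P_\ell(\bB)$ arbitrarily close to the finite value $\Delta_\ell$, so the right-hand side of the chain is finite and the resulting bound $\diam P_u(\bA) \leq \Delta_\ell$ is genuine. Beyond this, the argument is purely structural and uses nothing about the random graph $G(n,\K,p)$ itself, only the pointwise relationship between $\K_\ell$ and $\K_u$ and the two monotonicities under refinement.
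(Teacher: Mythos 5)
Your argument is correct and is essentially the paper's own proof: both form the common refinement $\bA\vee\bB$, use the two refinement monotonicities together with the fact that $P_\ell$ of a partition is a spanning subgraph of $P_u$ of the same partition, and then optimize over the two partitions. The only (harmless) difference is that you allow the infimum $\Delta_\ell$ to be merely approached rather than attained, which is if anything slightly more careful than the paper's phrasing.
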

\begin{proof}
Since $\Delta_\ell < \infty$, there exists a partition $\bA$ such that $\diam P_\ell(\bA) = \Delta_\ell$.
Let $\bB$ be an arbitrary partition. Consider the common refinement $\bA \vee \bB$. Since $P_\ell(\bA\vee \bB)$ is a subgraph of $P_u(\bA\vee \bB)$ we have that 
\[
\diam P_u(\bA\vee \bB) \leq \diam P_\ell(\bA\vee \bB).
\] 
Moreover, we also have that 
\begin{align*}
\diam P_u(\bB) \leq \diam P_u(\bA\vee \bB) \qquad \text{and} \qquad 
\diam P_\ell(\bA\vee \bB) \leq \diam P_\ell(\bA),
\end{align*}
because $\bA\vee \bB$ is a refinement of both $\bA$ and $\bB$. Combining these three inequalities we get that 
$\diam P_u(\bB) \leq \diam P_u(\bA) \le \diam P_\ell(\bA) = \Delta_\ell < \infty$. Therefore, the inequality also holds taking the supremum over all partitions. In particular, we can choose $\bB$ such that $\diam P_u(\bB) = \Delta_u$, and the desired inequality follows. 
\end{proof}

In particular, the above bound gives an easy way to determine $\Delta_u$ and $\Delta_\ell$ in the case they are equal. It suffices to find a partition $\bA$ that such that 
\[
\diam P_u(\bA) = \diam P_\ell(\bA).
\] 
We can also show the following bound.

\begin{lemma}\label{lem:lsmalleru}
If $\Delta_\ell < \infty$, then $\Delta_\ell \leq \Delta_u + 2$.
\end{lemma}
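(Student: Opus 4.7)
The plan is to exhibit a single partition $\bA$ with $\diam P_\ell(\bA) \leq \Delta_u + 2$; since $\Delta_\ell = \inf_\bA \diam P_\ell(\bA)$, the stated bound then follows. By Lemma~\ref{lem:usmallerl} we already have $\Delta_u \leq \Delta_\ell < \infty$, so $\Delta_u$ is a finite integer, and as an integer-valued supremum it is attained at some partition $\bB = \{\cB_1,\dots,\cB_m\}$. Because any refinement of $\bB$ weakly increases $\diam P_u$ but cannot exceed the supremum $\Delta_u$, every refinement of $\bB$ still has $\diam P_u = \Delta_u$, so I may refine $\bB$ freely.

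My $\bA$ will split each $\cB_i$ into two pieces, a ``hub'' $H_i \subset \cB_i$ of positive measure and the remainder $\cB_i \setminus H_i$, chosen so that
\begin{enumerate}[(a)]
\item $\K_\ell(H_i, H_j) > 0$ whenever $(\cB_i,\cB_j)$ is an edge of $P_u(\bB)$;
\item $\K_\ell(H_i, \cB_i \setminus H_i) > 0$.
\end{enumerate}
Granting (a) and (b), the partition $\bA = \{H_i,\, \cB_i\setminus H_i : 1\leq i\leq m\}$ completes the argument: condition (a) makes the subgraph of $P_\ell(\bA)$ induced on the hubs a supergraph of $P_u(\bB)$, so hubs are pairwise within $P_\ell(\bA)$-distance $\Delta_u$; condition (b) places each non-hub within one step of its own hub. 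Hence any two parts of $\bA$ lie at $P_\ell(\bA)$-distance at most $1 + \Delta_u + 1 = \Delta_u + 2$, giving $\Delta_\ell \leq \diam P_\ell(\bA) \leq \Delta_u + 2$.

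To construct the hubs I would use the $(\mu\otimes\mu)$-almost-everywhere continuity of $\K$. For each edge $(i,j)$ of $P_u(\bB)$, the condition $\K_u(\cB_i,\cB_j) > 0$ forces $\K > 0$ on a positive-measure subset of $\cB_i \times \cB_j$; intersecting with the conull set of continuity points of $\K$ yields a point $(x_{ij}, y_{ij})$ on an open rectangle around which $\K$ is uniformly bounded away from $0$. The key step is to pick, for each $i$ simultaneously, a single ``generic'' centre $x_i \in \cB_i$ that lies in the $\cB_i$-projection of every such rectangle for the neighbours of $i$ in $P_u(\bB)$, and then take $H_i$ to be a sufficiently small metric ball around $x_i$ intersected with $\cB_i$; this delivers (a). Property (b) is enforced similarly, by first refining $\bB$ finely enough that each $\cB_i$ is ``small'' and using irreducibility of $\K$ together with continuity to guarantee that the generic centre $x_i$ also sees $\cB_i \setminus H_i$ through $\K$; if even then $\cB_i \setminus H_i$ fails to be adjacent to $H_i$, it may be further subdivided into finitely many sub-pieces each adjacent to $H_i$ in $P_\ell$, without affecting the diameter bound.

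The main obstacle is this simultaneous hub construction: ensuring that the various ``positive-continuity'' rectangles produced per edge (plus the internal ones needed for (b)) can be anchored at a \emph{common} centre $x_i \in \cB_i$. The key technical tools would be Fubini combined with the $(\mu\otimes\mu)$-a.e.\ continuity of $\K$, irreducibility, and the liberty to refine $\bB$ without changing $\diam P_u$; together these should produce a positive-measure set of admissible centres in each $\cB_i$, from which $x_i$ can be selected.
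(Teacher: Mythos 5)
Your reduction is fine: if you could really produce hubs satisfying (a) and (b), the bound $\Delta_\ell\leq\Delta_u+2$ would follow as you say. The genuine gap is that (a) is not achievable in general, and your proposed fix does not close it. The problem is not just ``simultaneity'' as a technical nuisance: there need not exist \emph{any} common centre. Take $\cS=[0,3]$, $\cB_1=[0,1]$, $\cB_2=[1,2]$, $\cB_3=[2,3]$, with $\K>0$ on $[0,\tfrac12)\times\cB_2$ but $\K\equiv 0$ on $(\tfrac12,1]\times\cB_2$, and $\K>0$ on $(\tfrac12,1]\times\cB_3$ but $\K\equiv0$ on $[0,\tfrac12)\times\cB_3$. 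Then $(\cB_1,\cB_2)$ and $(\cB_1,\cB_3)$ are both edges of $P_u(\bB)$, yet no positive-measure $H_1\subset\cB_1$ has $\K_\ell(H_1,H_2)>0$ and $\K_\ell(H_1,H_3)>0$ simultaneously; Fubini and irreducibility cannot manufacture admissible centres because the set of points of $\cB_1$ seeing $\cB_2$ is disjoint from the set seeing $\cB_3$. What saves the day --- and what your argument is missing --- is that in exactly this situation a further refinement (splitting $\cB_1$ at $\tfrac12$) \emph{strictly increases} some distance in the upper partition graph. The paper's proof is built around this: it shows that either a genuine point-level path $x_1,\dots,x_r$ with $\K(x_i,x_{i+1})>0$ through continuity points exists (and then small balls around the $x_i$ give the $P_\ell$-path, as in your construction), or else one can refine so that the relevant $P_u$-distance increases by at least one; since no $P_u$-distance can exceed $\Delta_u<\infty$, finitely many iterations force the point-level path to exist. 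The existence of such a path is precisely the nontrivial content, and your proposal assumes it rather than proves it.

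Condition (b) has a similar problem: requiring $\K_\ell(H_i,\cB_i\setminus H_i)>0$, or even that $\cB_i\setminus H_i$ split into finitely many pieces each $P_\ell$-adjacent to $H_i$, can fail (there may be a positive-measure set of $y\in\cB_i$ with $\K(x,y)=0$ for all $x$ near your centre). The paper avoids this entirely: for a bad pair $(\cB_i,\cB_j)$ it picks $P_\ell(\bB)$-neighbours $\cB_s$ of $\cB_i$ and $\cB_f$ of $\cB_j$ (these exist since $\diam P_\ell(\bB)<\infty$), notes that the edges $\K_\ell(\cA_i,\cA_s)>0$ survive \emph{any} refinement because essential infima only increase on subrectangles, and only has to connect $\cB_s$ to $\cB_f$ by a short path in the refined lower graph. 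I would recommend restructuring your argument along these lines: keep the ``small balls around a point-path'' idea, but replace the hub construction by (i) the per-pair statement with the iterated-refinement contradiction against the maximality of $\Delta_u$, and (ii) attachment via pre-existing $P_\ell$-edges rather than via a new hub-to-remainder edge.
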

\begin{proof}
We state the following claim which we prove below.
\begin{claim}\label{claim1}
Suppose $\Delta_u < \infty$. Given a partition $\bB$, let $\cB_s,\cB_f \in \bB$. There exists a refinement $\bA \prec \bB$ such that there exist $\cA_s, \cA_f \in \bA$ with $c(\cA_s)=\cB_s$, $c(\cA_f)=\cB_f$ such that $d_{P_\ell(\bA)} (\cA_s,\cA_f) \leq \Delta_u$.
\end{claim}
Assuming the claim, the lemma follows easily: indeed, start with a partition $\bB$ with $\diam P_\ell(\bB) <\infty$. Consider all pairs 
\[
\mathfrak{P} = \{ (\cB_i,\cB_j) \in \bB\times\bB: d_{P_\ell(\bB)} (\cB_i,\cB_j) > \Delta_u + 2\}
\]
If $\mathfrak{P} = \emptyset$ then we are done, as $\Delta_\ell \leq \max_{\cB_i,\cB_j \in \bB} d_{P_\ell(\bB)} (\cB_i,\cB_j) \leq \Delta_u+2$. Therefore suppose $\mathfrak{P}  \neq \emptyset$. Since $\diam P_\ell(\bB) < \infty$ given $(\cB_i,\cB_j) \in \mathfrak{P}$ there exists $\cB_s$ and $\cB_f$ such that $\K_\ell(\cB_i,\cB_s) > 0$ and $\K_\ell(\cB_f,\cB_j) > 0$. 

Since $\Delta_\ell < \infty$,  by Lemma~\ref{lem:usmallerl} we have $\Delta_u \leq \Delta_\ell < \infty$. Thus, by Claim~\ref{claim1},  there exists $\bA \prec \bB$ such that $d_{P_\ell(\bA)} (\cA_s,\cA_f) \leq \Delta_u$ for some $\cA_s,\cA_f$ with $c(\cA_s)=\cB_s$ and $c(\cA_f)=\cB_f$. Then, for any $\cA_i,\cA_j\in \bA$ such that $c(\cA_i) = \cB_i$ and $c(\cA_j)=\cB_j$ we have that $\K_\ell(\cA_i,\cA_s) > 0$ and $\K_\ell(\cA_f,\cA_j) > 0$ and therefore $d_{P_\ell(\bA)}(\cA_i,\cA_j) \leq \Delta_u + 2$.

We construct such a partition $\bA$ for each pair in $\mathfrak{P}$. Since $\mathfrak{P}$ is finite consider a common refinement $\bC$ of all of these partitions. It is clear that $\bC$ has $\max_{\cC_i,\cC_j \in \bC} d_{P_\ell(\bC)} (\cC_i,\cC_j) \leq \Delta_u+2$, and since $\Delta_{\ell} \leq \max_{\cC_i,\cC_j \in \bC} d_{P_\ell(\bC)} (\cC_i,\cC_j)$, the lemma follows. \qedhere

\textbf{Proof of Claim~\ref{claim1}.}
Eliminate from $\cS$ all points where $\K$ is not continuous. Note that the removed set has measure $0$ and does not affect the calculations of essential infima and suprema. 

Next, suppose that in the remaining set there exist $x_1,\ldots, x_r$ with $r \leq \Delta_u$ with $x_1 \in \cB_1 := \cB_s, x_r \in \cB_r := \cB_f$, $x_i \in \cB_i$ for $i=2,\ldots,r-1$ and $\K(x_i,x_{i+1}) > 0$ for $i=1,\ldots,r-1$. By continuity of $\K$, there exist $\eps_i, \eps_{i+1} > 0$ such that for any $y$ in the ball $B(x_i,\eps_i)$ and $z$ in  the ball $B(x_{i+1},\eps_{i+1})$ we have $\K(y,z) > 0$. Consider the partition $\bA \prec \bB$ in the following way: all parts except for $\cB_i$ with $i=1,\ldots,r$ remain unchanged: for $i=1,\ldots,r$, $\cB_i$ is split into $\cA_i=\cB_i\cap B(x_i,\eps_i)$ and $\cA'_i=\cB_i \setminus B(x_i, \eps_i)$. Since in $\bA$ we have $d_{P_\ell(\bA)}(\cA_s,\cA_f)\leq r \leq \Delta_u$, we found the desired partition $\bA$. 

Otherwise, there exists no such path of length $r \leq \Delta_u$. Consider any path of minimal distance $\cB_1 := \cB_s, \cB_2,\ldots, \cB_r := \cB_f$ of length $r \leq \Delta_u$ in $P_u(\bB)$. For $i=1,\ldots,r$ let 
\[ 
\cA_i^s :=\{ x \in \cB_i: \exists (x_1,\ldots,x_i=x) \in (\cB_1,\ldots,\cB_i) \mid \K(x_j,x_{j+1}) > 0, j=1,\ldots,i-1 \}
\]
be the sets of vertices of $\cB_i$ to which there is a path that starts at $\cB_s$.
Similarly, 
\[ 
\cA_i^f :=\{ x \in \cB_i: \exists (x_i=x,\ldots,x_r) \in (\cB_i,\ldots,\cB_r) \mid \K(x_j,x_{j+1}) > 0, j=i,\ldots,r-1 \}
\]
be the sets of vertices of $\cB_i$ from which there is a path that finishes at $\cB_f$. 
Note that for all $i=1,\ldots,r$, we must have $\cA_i^s \cap \cA_i^f = \emptyset$, as otherwise we would have a path of length $r$ between $\cB_s$ and $\cB_f$. Consider the partition $\bA \prec \bB$ induced from splitting $\cB_i$ into $\cA_i^s, \cA_i^f$ and $\cB_i \setminus (\cA_i^s \cup \cA_i^f)$. Since some of these sets might be empty we consider the partition obtained after removing sets of measure zero. Note that for the new partition $\bA$, the shortest path starting from $\cA_1^s$ and ending at $\cA_r^f$ and using only elements $\cA_i^s, \cA_i^f, \cB_i \setminus (\cA_i^s \cup \cA_i^f)$ for some $i=1,\ldots,r$ must have length $d_1 \geq r+1$ in the upper partition graph corresponding to $\bA_1:=\bA$. If there are several minimal paths of length $r$ in $\bB$ between $\cB_s$ and $\cB_f$, do independently the same refinement and obtain for each such path a refined partition $\bA_i \prec \bB$. Note that there are only finitely many partitions, since there are only finitely many minimal paths of length $r$ in $\bB$. 
As before, when refining, distances in the upper partition graph either stay the same or increase. We may thus take a partition $\bC$ which is a common refinement of all $\bA_i$, and we have for all  $\cC_s, \cC_f \in \bC$ with $c(\cC_s)=\cB_s$ and $c(\cC_f)=\cB_f$, $d_{P_u(\bC)}(\cC_s,\cC_f) \geq d_1$. If $d_1 > \Delta_u$, we found a new partition $\bC$ with diameter bigger than $\Delta_u$, contradicting our assumption on $\Delta_u$. 
Otherwise we apply the claim with partition $\bC$ playing the role of partition $\bB$. Note that there are only finitely many elements $\cC_s, \cC_f$ with $c(\cC_s)=\cB_s$ and $c(\cC_f)=\cB_f$. For a fixed pair of such elements $\cC_s,\cC_f$ apply the claim (yielding a sequence of refined partitions corresponding  to all minimal paths of length $d$ between them), giving either the desired path or a partition being a common refinement of all these partitions. Taking then again the refinement of all refined partitions corresponding to all pairs $\cC_s, \cC_f$ yields a new refinement $\bD$ in which all pairs are at distance $d_2 \geq d_1+1$, and the claim can then be applied with $\bD$ playing the role of $\bC$. Since $\Delta_u < \infty$, after finitely many iterations we must have found the desired path of length at most $\Delta_u$, and the claim follows.\hfill\qed
\end{proof}

The first part of Theorem~\ref{thm:diff2} follows now easily by combining Lemma~\ref{lem:usmallerl} and Lemma~\ref{lem:lsmalleru}. For the second part, to show that both bounds can be attained, on the one hand consider a constant kernel defined as $\K(x,y)=1$ for all $x \leq  y$, and extended by symmetry for $x > y$. Clearly, for any partition $\bA$ of $\cS$, the upper and lower partitions corresponding to $\bA$ are the same graphs, and therefore for such a kernel $\Delta_{\ell}=\Delta_u$. On the other hand, to show that $\Delta_{\ell} = \Delta_u + 2$ can be attained, consider for example the following kernel: let $\cS=[0,k+2]$, and let  $\varepsilon > 0$ be a sufficiently small constant that ensures that consecutive intervals overlap below (note that by dividing all values by $k+2$ clearly the same result can be obtained when considering $\cS=[0,1]$). Then define $\K(x,y)$ for $x < y$ as follows:
\[
\K(x,y) = \ind{A} + \sum_{i=1}^k \ind{B_i\setminus A} \left(x- \frac{(i-1)(k+2)(1-\varepsilon)}{k}\right) \left(\frac{i(k+2)}{k} - y \right),
\]
where
\begin{align*}
A &= \Big\{(x,y)\in\cS\times\cS: y-x \leq 1 \Big\} \\
\intertext{and}
B_i &= \left\{(x,y)\in\cS\times\cS: \frac{(i-1)(k+2)(1-\varepsilon)}{k} \leq x < y \leq \frac{i (k+2)}{k} \right\}.
\end{align*}

Clearly, for any partition $\bA$ of $\cS$, for both the upper and lower partition graphs of $\bA$ the diameter of the corresponding graph is attained for the distance between $0$ and $k+2$. To see that $\Delta_u=k$ consider an arbitrarily fine partition $\bA$: the element of $\bA$ containing $0$ is connected by an edge in the upper partition graph to the element of $\bA$ containing $(k+2)/k$, this one is then connected by an edge to the element of $\bA$ containing $2(k+2)/k$, and continuing like this, the element containing $(k-1)(k+2)/k < k$ is then connected by an edge to the element of $\bA$ containing $k$, and we have found a path of length $k$. Clearly, in an arbitrarily fine partition these elements of all $\bA$ are all different, and no shorter path can be found, since the element of $\cA$ containing $i(k+2)/k$ is not connected by an edge to the element of $\cA$ containing only elements bigger than $(i+1)(k+2)/k$. By definition of $\K$ is certainly an optimal strategy, and  hence $\Delta_u \leq k$. To show that $\Delta_{\ell} \geq k+2$, observe that the element of $\bA$ containing $0$ can possibly be connected by an edge to the element of $\bA$ containing $1$, but not to an element of $\bA$ containing only elements of $\cS$ bigger than $1$. For $\varepsilon$ small enough, $(k+2)(1-\varepsilon)/k > 1$, and therefore in the next step the element of $\bA$ containing $1$ can possibly be connected by an edge at most to the element of $\bA$ containing $2$, but not to an element containing only elements of $\bA$ bigger than $2$. Repeating the same argument, for any $i=2,\ldots,k-1$, the element containing $i$ can possibly be connected by an edge at most to the element of $\bA$ containing $i+1$, but not to an element of $\bA$ containing only elements of $\cS$ bigger than $i+1$. By construction of $\K$ this is clearly an optimal strategy, and hence $\Delta_{\ell} \geq k+2$. By the first part of the theorem, we then must have $\Delta_u=k$ and $\Delta_{\ell}=k+2$. The proof of Theorem~\ref{thm:diff2} is complete.

\section{Expansion of neighborhoods}\label{sec:expansion}
Let $G(n,\K, p)$ be an inhomogeneous random graph induced by an irreducible kernel $\K$  that is continuous $(\mu\otimes\mu)$-almost everywhere.  
In this section we prove that the number of vertices at distance $k$ from a given vertex grows exponentially with $k$. The main tool we use is concentration inequalities. 

\begin{lemma}\label{lem:step}
Let $\bA=\{\cA_1,\dots,\cA_m\}$ be a partition of $\cS$. Let $U \subset V(\cA_i) \leq |V(\cA_i)|/2$. Let $\cA_j \in \bA$, with possibly $\cA_i=\cA_j$ and suppose that $\K_\ell(\cA_i,\cA_j) > 0$. Define 
\[S_{i,j} := n\mu(\cA_j) \left( 1-(1-\K_\ell(\cA_i,\cA_j)p)^{|U|}\right) 
= \begin{cases} 
n\mu(\cA_j)(1+o(1)), & p|U|=\omega(1), \smallskip \\ 
\Theta(n \mu(\cA_j)), & p|U|=\Theta(1), \smallskip \\ 
n\mu(\cA_j)p|U|(1+o(1)), & p|U|=o(1).
\end{cases}
\]
Then
\[
\Pr{ | N(U)\cap (V(\cA_j) \setminus U) | \leq \frac{ S_{i,j}}{4}  \big.} \leq e^{-S_{i,j}/16}.
\]
\end{lemma}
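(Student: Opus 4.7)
The plan is to condition on the realization of the types $(X_v)_{v=1}^n$ and exploit the conditional independence of the edges across distinct endpoints. Under this conditioning, the sets $V(\cA_i)$, $V(\cA_j)$, and $U$ are deterministic, and for each $w \in V(\cA_j)\setminus U$ the indicator $Y_w := \ind{w\in N(U)}$ depends only on the edge-indicators $\{(u,w): u\in U\}$. These bundles are mutually independent across $w$, and each factor satisfies $\K(X_u,X_w)p\geq \K_\ell(\cA_i,\cA_j)p$ almost surely (the exceptional measure-zero set from the definition of $\K_\ell$ does not contribute). Hence
\[
\Pr{Y_w=1 \mid (X_v)_v} \geq 1 - (1-\K_\ell(\cA_i,\cA_j)p)^{|U|},
\]
and $Y := |N(U)\cap(V(\cA_j)\setminus U)| = \sum_{w} Y_w$ is, conditional on the types, a sum of independent Bernoullis.

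First I would control $|V(\cA_j)\setminus U|$ in terms of $n\mu(\cA_j)$. Since $|V(\cA_j)|\sim\mathrm{Bin}(n,\mu(\cA_j))$, a standard Chernoff bound yields $|V(\cA_j)|\geq (1-o(1))n\mu(\cA_j)$ with probability at least $1-\exp(-\Omega(n\mu(\cA_j)))$, and the hypothesis $|U|\leq |V(\cA_i)|/2$ gives $|V(\cA_j)\setminus U|\geq |V(\cA_j)|/2$ in the worst case $\cA_i=\cA_j$ (otherwise $V(\cA_j)\setminus U=V(\cA_j)$, with no loss). Combined with the previous paragraph, this produces
\[
\E[Y\mid (X_v)_v] \;\geq\; (1-o(1))\,\frac{S_{i,j}}{2}
\]
on a high-probability event. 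The three asymptotic regimes in the statement of $S_{i,j}$ are then immediate by Taylor-expanding $(1-\K_\ell(\cA_i,\cA_j)p)^{|U|}$ according to whether $p|U|$ is $\omega(1)$, $\Theta(1)$, or $o(1)$.

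Next I would apply the multiplicative Chernoff bound to the conditionally independent sum $Y$: if its conditional mean $\mu$ satisfies $\mu\geq S_{i,j}/2$, then $\Pr{Y\leq \mu/2}\leq \exp(-\mu/8)$, and since $S_{i,j}/4\leq \mu/2$ we conclude
\[
\Pr{Y\leq S_{i,j}/4 \mid (X_v)_v} \;\leq\; \exp(-S_{i,j}/16).
\]
Averaging over the types and absorbing the negligible $\exp(-\Omega(n\mu(\cA_j)))\leq \exp(-\Omega(S_{i,j}))$ failure probability (since $S_{i,j}\leq n\mu(\cA_j)$) gives the claimed unconditional bound, up to tuning the constants in the Chernoff step to accommodate the $(1-o(1))$ factors.

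The one subtlety I expect to matter in applications is that $U$ is typically not a set chosen in advance but rather the boundary of a BFS ball, so it depends on edges already seen. This is handled by the standard deferred-decisions trick: expose first the types and the edges within the already-revealed BFS layers (which determine $U$) and leave the edges between $U$ and $V(\cA_j)\setminus U$ unexposed; conditionally on $(X_v)_v$ and $U$ these remaining edges are still independent Bernoullis, so the argument above applies verbatim. With this caveat, the proof is essentially a two-level Chernoff estimate, and the main delicate point is tracking constants so that the final exponent is exactly $S_{i,j}/16$.
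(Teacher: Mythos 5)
Your proposal is correct and follows essentially the same route as the paper: write $|N(U)\cap(V(\cA_j)\setminus U)|$ as a sum of independent indicators, lower-bound each success probability via $\K_\ell(\cA_i,\cA_j)$, note that the mean is at least $S_{i,j}/2$ (the factor $2$ coming from $|U|\leq |V(\cA_i)|/2$), and finish with a Chernoff bound. The only cosmetic difference is that the paper keeps the randomness of the types $X_v$ for $v\notin U$ inside the indicators (so the Bernoulli parameter is $\mu(\cA_j)\bigl(1-(1-\K_\ell(\cA_i,\cA_j)p)^{|U|}\bigr)$ and no separate concentration of $|V(\cA_j)|$ is needed), whereas you condition on all types and add an extra Chernoff step for $|V(\cA_j)|$ — both are valid.
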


\begin{proof}
We write the number of vertices as a sum of indicators
\[
| N(U)\cap (V(\cA_j) \setminus U) | = \sum_{v\notin U} \ind{X_v\in \cA_j} \ind{v \in N(U)}.
\]
Note that, for different values of $v\notin U$, the events $[X_v\in \cA_j]\cap [v \in N(U)]$ are independent and identically distributed. Fix a vertex $v \notin U$. We want to bound the probability of these events conditionally on $[U\subset V(\cA_i)]$. We begin by noting that 
\begin{equation}\label{eq:setprob}
\Pr{X_v\in \cA_j \mid U\subset V(\cA_i)\big.} = \Pr{X_v\in \cA_j} = \mu(\cA_j),
\end{equation}
Moreover, by independence we have
\begin{align*}
\Pr{v \notin N(U) \mid U\subset V(\cA_i), X_v\in \cA_j \big.} 
&= \prod_{u\in U} \Pr{v \notin N(u) \mid X_u\in \cA_i, X_v\in \cA_j \big.} \\
&\leq (1-\K_\ell(\cA_i,\cA_j)p )^{|U|}.
\end{align*}
Therefore, we can bound 
\begin{align}\label{eq:linkprob}
\Pr{v \in N(U) \mid U\subset V(\cA_i), X_v\in \cA_j \big.}  &\geq 1-(1-\K_\ell(\cA_i,\cA_j)p)^{|U|} 
\end{align}
Putting equations \eqref{eq:setprob} and \eqref{eq:linkprob} together we obtain
\begin{align*}
&\Pr{X_v\in \cA_j, v \in N(U) \mid U\subset V(\cA_i)\big.}  \\
&= \Pr{X_v\in \cA_j \mid U\subset V(\cA_i)\big.} \cdot \Pr{v \in N(U) \mid U\subset V(\cA_i), X_v\in \cA_j \big.} \\
&\geq  \mu(\cA_j) \left(1-(1-\K_\ell(\cA_i,\cA_j)p)^{|U|} \right)
\end{align*}
The proof is complete by applying a Chernoff bound \cite{Che52} to $| N(U)\cap (V(\cA_j)\setminus U) |$ noting that $S_{i,j}/2 \leq \E\; | N(U)\cap (V(\cA_j) \setminus U) |$.
\end{proof}
We will the previous lemma iteratively to get concentrations for expansions of the $i$-th neighborhood. 
We need a few definitions: let
\[ 
\K_\ell := \min \{ \K_\ell(\cA_i,\cA_j):  (\cA_i,\cA_j) \text{ is an edge of $P_\ell(\bA)$} \} 
\]
and for a partition $\bA=\{\cA_1,\ldots, \cA_r\}$ set
\[
\norm{\bA}_\mu = \min \{ \mu(\cA_i): \cA_i \in \bA \}.
\]
Let $S := n\norm{\bA}_{\mu} \K_\ell p/4$ and note that $S_{i,j}/4 \geq S$ for all $\cA_j, \cA_i \in \bA$.  Define $t(k)=k$ for $0 \leq k \leq \Phi-2$ and $t(k)=\Phi-2$ for $k > \Phi-2$.
Given a partition $\bA$, consider a walk $\cA_0,\cA_1,\dots,\cA_\ell$ in $P_\ell(\bA)$ (possibly making zigzags and reusing partitions) and let $u\in V(\cA_0)$. Define $\Gamma_0(u) = \{ u \}$, and for $k \geq 1$, define first recursively the following auxiliary variables $\Gamma'_k(u)$: for $|\Gamma_{k-1}(u)| \leq |V(\cA_{k-1})|/2$, let $\Gamma'_{k-1}(u):=\Gamma_{k-1}(u)$, and otherwise define $\Gamma'_{k-1}(u) \subseteq \Gamma_{k-1}(u)$ to be a randomly chosen subset of $\Gamma_{k-1}(u)$ such that $|\Gamma'_{k-1}(u)|=\lfloor |V(\cA_{k-1})|/2 \rfloor$. Define then
\[
\Gamma_k(u) = \left\{ v\in V(\cA_k)\setminus \Gamma'_{k-1}(u): N(v)\cap \Gamma'_{k-1}(u)\neq \emptyset \Big.\right\}.
\]

We can now prove the following lemma.

\begin{lemma}\label{lem:path}
Let $\omega$ be a function tending to infinity arbitrarily slowly as $n \to \infty$, let $p \geq \omega \log n/n$ and let $\ell \leq \omega$. Let $\bA$ be a partition of $\K$ and let $\cA_0,\cA_1,\dots,\cA_\ell$ be a walk in $P_\ell(\bA)$, and let $u\in V(\cA_0)$. Then, with probability $1-o(n^{-\omega})$,
\[
 |\Gamma_\ell(u)| \geq S^{t(\ell)}.
\]
 \end{lemma}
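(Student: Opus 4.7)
The plan is induction on the walk length $k$, applying Lemma~\ref{lem:step} at each step.

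First, I would expose the vertex types $X_1,\dots,X_n$. By a Chernoff bound the event $E$ that $|V(\cA_i)| = (1+o(1))n\mu(\cA_i)$ for every $\cA_i\in\bA$ holds with probability $1-o(n^{-\omega})$, and I condition on it. On $E$, the cap $\lfloor |V(\cA_{k-1})|/2\rfloor$ in the definition of $\Gamma'_{k-1}(u)$ is of order $n$, and hence is not active while $|\Gamma_{k-1}(u)|$ stays polynomially smaller than $n$; since $S^{\Phi-2}=O((np)^{\Phi-2})\ll n$, this covers the entire range we care about.

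The base case $|\Gamma_0(u)|=1=S^0$ is immediate. For the inductive step, assume $|\Gamma_{k-1}(u)|\geq S^{t(k-1)}$ and set $U=\Gamma'_{k-1}(u)\subset V(\cA_{k-1})$; by construction $|U|\leq |V(\cA_{k-1})|/2$. Because $(\cA_{k-1},\cA_k)$ is an edge of $P_\ell(\bA)$, $\K_\ell(\cA_{k-1},\cA_k)\geq \K_\ell > 0$ and Lemma~\ref{lem:step} applies with $\cA_i=\cA_{k-1}$, $\cA_j=\cA_k$. Observing that $\Gamma_k(u)=N(U)\cap(V(\cA_k)\setminus U)$, the lemma yields
\[
|\Gamma_k(u)| \geq S_{k-1,k}/4
\]
with failure probability at most $e^{-S_{k-1,k}/16}$. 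A short computation using $\K_\ell(\cA_{k-1},\cA_k)\geq\K_\ell$, $\mu(\cA_k)\geq\norm{\bA}_\mu$, and $1-(1-q)^{|U|}\geq q|U|/2$ for $q|U|\leq 1$ shows that $S_{k-1,k}/4$ is a constant multiple of $S\cdot |U| \geq S^{t(k)}$ while $p|U|=o(1)$; once $p|U|=\Omega(1)$, $S_{k-1,k}=\Theta(n\mu(\cA_k))$, which already exceeds $S^{\Phi-2}$ and handles every remaining step at once. A union bound over the $\ell\leq \omega$ steps completes the argument: since $S=\Omega(np)=\Omega(\omega\log n)$, each step fails with probability at most $e^{-S/16} = n^{-\Omega(\omega)}$, so the total failure probability remains $o(n^{-\omega})$.

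The main obstacle I anticipate is the independence needed to invoke Lemma~\ref{lem:step} at step $k$: the edges between $\Gamma'_{k-1}(u)$ and $V(\cA_k)\setminus\Gamma'_{k-1}(u)$ must not yet have been conditioned upon. I would address this by revealing edges in BFS-like order along the walk, and whenever the walk revisits a partition cell, restricting the lower bound in Lemma~\ref{lem:step} to the "fresh" portion of $V(\cA_k)$ not previously touched. This costs at most a constant factor, which (together with the factor-$1/2$ in the tangent-line bound above) can be absorbed by shrinking $S$ by a constant without changing $S^{t(\ell)}$ asymptotically.
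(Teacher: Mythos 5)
Your proposal is correct and follows essentially the same route as the paper: iterate Lemma~\ref{lem:step} along the walk with $U=\Gamma'_{k-1}(u)$, split into the regimes $p|U|=o(1)$ (where the guaranteed size multiplies by roughly $S$ at each step) and $p|U|=\Omega(1)$ (where one step already yields $\Theta(n\mu(\cA_k))\gg S^{\Phi-2}$), and union bound the $\ell\leq\omega$ failure probabilities $e^{-\Omega(S^{t(k-1)})}\leq e^{-\Omega(np)}$. The paper formalizes this via the nested events $\cF_k$ and conditioning on $\cF_{k-1}$; your extra care about the capping of $\Gamma'_{k-1}(u)$ and about revealing edges in BFS order is a reasonable (if anything, slightly more explicit) treatment of points the paper leaves implicit.
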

\begin{proof}
Note that $\Gamma_k(u) = N(\Gamma'_{k-1}(u))\cap ( V(\cA_k) \setminus \Gamma'_{k-1}(u))$. Define the events $\cF_0 = \Omega$ and in general by induction, for $k \geq 1$,  
\[
\cF_k = \bigcap_{i=0}^{k-1} \cF_i \cap \left[ | \Gamma_k(u) | \geq S^{t(k)}\right].
\]
Note that if $\cF_k$ holds, we have that $|\Gamma_i(u)| \geq S^{t(i)}$ for all $i=0,\dots,k$. We will now bound $\Pr{\cF_k^c | \cF_{k-1}}$ from above and condition under $\cF_{k-1}$. If  $| \Gamma'_{k-1}(u)|p=o(1)$, then 
\begin{align*}
n\mu(\cA_k)p S^{t(k-1)}(1/2+o(1)) \leq n\mu(\cA_k)p|\Gamma'_{k-1}(u)|(1/2 + o(1)) \\
 =n\mu(\cA_k)\left(1+o(1)\right)\left( 1-(1-\K_\ell(\cA_{k-1},\cA_k)p)^{|\Gamma'_{k-1}(u)|}\right)/2 \leq \E\; |\Gamma_k(u)|.
 \end{align*}
Hence by Lemma~\ref{lem:step} we have that
\[
\Pr{\cF_k^c | \cF_{k-1}} \leq e^{-\Omega(|\Gamma'_{k-1}(u)|)} \leq e^{- \Omega(S^{t(k-1)})}.
\]
Otherwise, if $| \Gamma_{k-1}(u)|p=\Omega(1)$, then  
\begin{align*}
\Theta(n \mu(\cA_k))&=n\mu(\cA_k)\left( 1-(1-\K_\ell(\cA_{k-1},\cA_k)p)^{|\Gamma_{k-1}(u)|}\right)/2 \leq \E\; |\Gamma_k(u)|,
\end{align*} 
and in this case we have
\[
\Pr{\cF_k^c | \cF_{k-1}} \leq e^{-\Omega(n)}.
\]
Putting together these bounds for different values of $k$ we obtain 
\begin{align*}
\Pr{\cF_\ell^c} 
&\leq \sum_{k=2}^\ell \Pr{\cF_k^c | \cF_{k-1}} \\
&\leq \sum_{k=2}^{\Phi -2} e^{- \Omega(S^{t(k-1)})} + \ell e^{-\Omega(n)} \\
&= e^{-\Omega(np)}=n^{-\omega}.
\end{align*}
The lemma follows.
\end{proof}

\section{Bounding the diameter}\label{sec:diameter}
We dedicate this section to the proof of Theorem \ref{thm:main}. We continue to write $G(n,\K, p)$ for an inhomogeneous random graph induced by an irreducible kernel $\K$  that is continuous $(\mu\otimes\mu)$-almost everywhere.  
We break up the proof of the theorem into six lemmas where we study the behavior of the diameter of  $G(n,\K, p)$  depending on the relationships between $\Phi, \Delta_u$ and $\Delta_\ell$.

\begin{lemma}\label{lem:DeltaPlusLB}
With high probability, $\diam G(n,\K,p) \geq \Delta_u$.
\end{lemma}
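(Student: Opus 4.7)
The plan is to exhibit a specific partition that witnesses $\Delta_u$ and then show that distances in $G(n,\K,p)$ project onto walks in the corresponding upper partition graph, so lengths cannot drop.

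Concretely, I would first choose a partition $\bA$ that attains the supremum: since $\diam P_u(\bA)$ is a non-negative integer and, in the regime of the theorem, $\Delta_u$ is finite (by Theorem~\ref{thm:diff2} if $\Delta_\ell<\infty$, which is the relevant case), there exists a partition $\bA$ with $\diam P_u(\bA)=\Delta_u$. Pick two parts $\cA_s,\cA_f\in\bA$ realizing $d_{P_u(\bA)}(\cA_s,\cA_f)=\Delta_u$.

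Next, I would translate the non-edge structure of $P_u(\bA)$ into a statement about $G(n,\K,p)$. For any pair $\cA_p,\cA_q\in\bA$ with $\K_u(\cA_p,\cA_q)=0$, the definition of essential supremum gives $\K(x,y)=0$ for $(\mu\otimes\mu)$-almost every $(x,y)\in\cA_p\times\cA_q$. Hence, conditional on the types, the edge probability $\K(X_i,X_j)p$ between any $i\in V(\cA_p)$ and $j\in V(\cA_q)$ vanishes almost surely. Since $\bA$ is finite, a union bound over the finitely many such non-adjacent pairs shows that, almost surely, every edge of $G(n,\K,p)$ lies over an edge of $P_u(\bA)$. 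Consequently any path in $G(n,\K,p)$ between vertices of types in $\cA_s$ and $\cA_f$ projects to a walk of the same length in $P_u(\bA)$ between $\cA_s$ and $\cA_f$, and therefore has length at least $\Delta_u$.

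Finally, since $\mu(\cA_s),\mu(\cA_f)>0$ are fixed positive constants, standard concentration (or just the first-moment estimate $\Pr{V(\cA_s)=\emptyset}=(1-\mu(\cA_s))^n=o(1)$, and similarly for $\cA_f$) shows that with high probability there exist $u\in V(\cA_s)$ and $v\in V(\cA_f)$. Combining with the projection argument, $d_{G(n,\K,p)}(u,v)\geq \Delta_u$, and hence $\diam G(n,\K,p)\geq \Delta_u$ with high probability.

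I do not expect a real obstacle here; the only point that requires some care is the passage from $\K_u(\cA_p,\cA_q)=0$ to the absence (almost surely) of edges between $V(\cA_p)$ and $V(\cA_q)$, which rests on the essential-supremum definition and the finiteness of the partition. Everything else is a matter of choosing $\bA$ to witness the supremum and invoking a trivial non-emptiness argument for $V(\cA_s)$ and $V(\cA_f)$.
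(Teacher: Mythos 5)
Your proposal is correct and follows essentially the same route as the paper: fix a partition attaining $\Delta_u$, observe that almost surely every edge of $G(n,\K,p)$ projects onto an edge of $P_u(\bA)$ so graph distances dominate partition-graph distances, and note that the two extremal parts are nonempty with high probability. The only difference is that you spell out the attainment of the supremum and the essential-supremum argument more explicitly than the paper does.
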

\begin{proof}
Consider a partition $\bA$ attaining $\Delta_u$. With probability $1$, there is no edge between any pair of vertices $w,x$ and any two elements $\cA_k, \cA_{\ell} \in \bA$, such that $w \in V(\cA_k), x \in V(\cA_{\ell})$, and $\cA_k,\cA_{\ell} \notin E(P_u(\bA))$. Note that with probability $1-e^{-\Omega(n)}$ we can find two vertices $u,v \in V(\cA_i), V(\cA_j)$ such that $\cA_i$ and $\cA_j$ are at distance $\Delta_u$ in $P_u(\bA)$, and hence the lemma follows.
\end{proof}

\begin{lemma}\label{lem:DeltaMinusUB}
Suppose $\Delta_\ell > \Phi$. With high probability, $\diam G(n,\K,p) \leq \Delta_\ell$.
\end{lemma}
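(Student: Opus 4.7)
We may assume $\Delta_\ell<\infty$ (else the conclusion is vacuous) and fix a partition $\bA$ with $\diam P_\ell(\bA)=\Delta_\ell$. Since $\Delta_\ell$ does not depend on $n$ and $\Delta_\ell>\Phi$, the expansion factor $\Phi$ is bounded by a constant for large $n$. The plan is to show that for every ordered pair $(u,v)$ of vertices, with probability at least $1-o(n^{-2})$ there is a $u$--$v$ path in $G(n,\K,p)$ of length at most $\Delta_\ell$, and then to union bound over the $n^2$ pairs. The path is produced by running BFS from $u$ and from $v$ along a suitably chosen walk in $P_\ell(\bA)$ and closing the remaining gap with a single edge.

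Let $\cA_i,\cA_j\in\bA$ be the partition elements containing $X_u, X_v$, and set $r=d_{P_\ell(\bA)}(\cA_i,\cA_j)$. Because $P_\ell(\bA)$ is connected (as $\lam>0$) with diameter $\Delta_\ell\geq\Phi+1$, one can choose a walk $W:\cA_0=\cA_i,\cA_1,\dots,\cA_\ell=\cA_j$ of length $\ell\in[\max(\Phi,r),\Delta_\ell]$: if $r\geq\Phi$ take a shortest path of length $r$; if $r<\Phi$ extend a shortest path by a local detour, using that both $\Phi$ and $\Phi+1$ lie in $[\Phi,\Delta_\ell]$ so that either parity is available when $P_\ell(\bA)$ happens to be bipartite. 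Split $\ell-1=k_1+k_2$ with $k_1,k_2\leq\Phi-2$ whenever possible, so $t(k_1)+t(k_2)\geq\min(\ell-1,\,2(\Phi-2))$, and apply Lemma~\ref{lem:path} to the BFS along $W$ starting at $u$ for $k_1$ steps, and to the BFS along $W^{-1}$ starting at $v$ for $k_2$ steps: with probability $1-o(n^{-\omega})$,
\[
|\Gamma_{k_1}(u)|\geq S^{t(k_1)}\quad\text{and}\quad |\Gamma_{k_2}(v)|\geq S^{t(k_2)},
\]
with the first set inside $V(\cA_{k_1})$ and the second inside $V(\cA_{k_1+1})$.

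Because $(\cA_{k_1},\cA_{k_1+1})\in E(P_\ell(\bA))$, we have $\K_\ell(\cA_{k_1},\cA_{k_1+1})\geq\K_\ell>0$. Choosing $W$ so that the unordered pair $\{\cA_{k_1},\cA_{k_1+1}\}$ occurs only at position $k_1$ along $W$ (automatic for a simple walk, and arrangeable in general using the slack $\Delta_\ell-\Phi\geq 1$), no edge between $V(\cA_{k_1})$ and $V(\cA_{k_1+1})$ has been queried by either BFS. Conditionally on the two expansions, the edges between $\Gamma_{k_1}(u)$ and $\Gamma_{k_2}(v)$ are therefore independent Bernoulli variables of parameter at least $\K_\ell p$, and the probability that none is present is at most
\[
(1-\K_\ell p)^{|\Gamma_{k_1}(u)||\Gamma_{k_2}(v)|}\leq\exp\bigl(-\Omega(p(np)^{\ell-1})\bigr)=\exp\bigl(-\Omega((np)^{\ell}/n)\bigr)\leq\exp(-\omega\log n),
\]
using \eqref{eq:diambounds} and $\ell\geq\Phi$. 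The existence of any such edge yields $d_G(u,v)\leq k_1+1+k_2=\ell\leq\Delta_\ell$, and a union bound over the $n^2$ pairs $(u,v)$ and the $O(1)$ candidate walks in $P_\ell(\bA)$ finishes the proof. The principal technical nuisance is the construction of $W$ when $r<\Phi$: we must simultaneously achieve length in $[\Phi,\Delta_\ell]$ of the correct parity and keep the middle edge class $\{\cA_{k_1},\cA_{k_1+1}\}$ unrepeated, and the hypothesis $\Delta_\ell>\Phi$ is exactly the slack in $P_\ell(\bA)$ that makes this possible.
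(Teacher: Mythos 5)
Your proof follows essentially the same route as the paper's: fix a partition attaining $\Delta_\ell$, choose a walk of length $\ell\in[\Phi,\Delta_\ell]$ in $P_\ell(\bA)$ between the classes of $u$ and $v$, grow neighborhoods along it via Lemma~\ref{lem:path}, close the gap with a single edge whose failure probability is $e^{-\Omega((np)^{\ell}/n)}=n^{-\omega}$ by \eqref{eq:diambounds}, and union bound over pairs. The only real difference is cosmetic --- the paper grows $\Phi-2$ (or more) steps from $u$ and a single step from $v$ rather than splitting the walk near its midpoint --- and you are if anything more explicit than the paper about the parity of the walk and about not re-exposing the connecting edge class.
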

\begin{proof}
We may assume $\Delta_\ell < \infty$, as otherwise there is nothing to prove.
Consider a partition $\bA$ attaining $\Delta_\ell$, and consider two arbitrary vertices $u,v \in V(\cA_i), V(\cA_j)$. Suppose first that in $P_\ell(\bA)$ there exists $\cA_r \in \bA$ such that $\cA_r,\cA_s,\cA_j$ is a walk (possibly reusing partitions and making zigzags) of length $2$ in $P_\ell(\bA)$, and  such that there exists a walk (possibly reusing partitions and making zigzags) of length $\Phi-2$ from $\cA_i$ to $\cA_r$. Then note that by Lemma~\ref{lem:path}, with probability at least $1-o(n^{-\omega})$
\begin{equation}\label{eq:Gamma1}
|\Gamma^{\Phi-2}(u) \cap V(\cA_r) | \geq (\norm{\bA}_{\mu}\K_\ell np/4)^{\Phi-2}.
\end{equation}
Also, with probability at least $1-o(n^{-\omega})$,
$|\Gamma(v) \cap V(\cA_s) | \geq \norm{\bA}_{\mu}\K_\ell np/4$. Assuming these conditions deterministically, the probability that there is no edge between $\Gamma^{\Phi-1}(u) \cap V(\cA_r)$ and $\Gamma(v) \cap V(\cA_s)$ is at most
\begin{equation}\label{eq:Gamma3}
(1-\K p)^{\norm{\bA}_{\mu}\K_\ell np/4)^{\Phi-1}} = e^{-\Omega(n^{\Phi-1}p^{\Phi})}=n^{-\omega},
\end{equation}
and hence with probability $1-n^{-\omega}$ we found a path of length $\Phi < \Delta_\ell$ between $u$ and $v$. Similarly, if there exists $\cA_r \in \bA$, where $\cA_r$ is such that $\cA_r,\cA_s,\cA_j$ is a walk of length $2$, and a walk of length $\Phi -1$ from $\cA_i$ to $\cA_r$, by Lemma~\ref{lem:path},
\[
|\Gamma^{\Phi-1}(u) \cap V(\cA_r) | \geq (\norm{\bA}_{\mu}\K_\ell np/4)^{\Phi-2},
\]
and by the same argument as before, we found a path of length $\Phi+1 \leq \Delta_\ell$ between $u$ and $v$. Otherwise, there must exist $\cA_r \in \bA$, such that $\cA_r,\cA_s,\cA_j$ is a path of length $2$, and such that the shortest path between $\cA_i$ and any vertex $\cA_r$ in $P_\ell(\bA)$ has length $\Phi \leq \ell \leq \Delta_\ell-2$. (If that were not the case, then $\cA_i$ and $\cA_j$ would be at distance bigger than $\Delta_\ell$, contradicting our assumption on $P_\ell(\bA)$. ) 
Again by Lemma~\ref{lem:path},
\[
 |\Gamma^{\ell}(u) \cap V(\cA_r) | \geq (\norm{\bA}_{\mu}\K n(p-p^2)/4)^{\Phi-2},
\]
 and by the same argument as before, with probability $1-o(n^{-\omega})$ we get a path of length $\ell+2 \leq \Delta_\ell$ between $u$ and $v$. By a union bound over all pairs of vertices, the statement follows.
\end{proof}

\pagebreak

\begin{lemma}\label{lem:PhiLB}
With high probability, $\diam G(n,\K,p) \geq \Phi$.
\end{lemma}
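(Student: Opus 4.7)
The plan is to exploit the trivial bound $\K \leq 1$ via a monotone coupling with the homogeneous Erd\H{o}s--R\'enyi graph, and then quote Lemma~\ref{lem:diameter}. Concretely, sample the types $X_1,\ldots,X_n$ and, independently, a family of i.i.d.\ uniform random variables $U_{ij}\in[0,1]$ indexed by unordered pairs $\{i,j\}$. Declare $\{i,j\}$ an edge of $G(n,p)$ iff $U_{ij}\leq p$, and an edge of $G(n,\K,p)$ iff $U_{ij}\leq \K(X_i,X_j)p$. Since $\K(X_i,X_j)\leq 1$, every edge of $G(n,\K,p)$ is also an edge of $G(n,p)$, so on this common probability space $G(n,\K,p)$ is a spanning subgraph of $G(n,p)$. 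Graph distances are monotone non-increasing when edges are added, which yields the pointwise inequality $\diam G(n,\K,p)\geq \diam G(n,p)$, with the convention (used consistently in this paper, cf.\ Lemma~\ref{lem:DeltaPlusLB}) that a disconnected graph has diameter $+\infty$.

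It therefore suffices to show $\diam G(n,p)=\Phi$ with high probability. For this I would apply Lemma~\ref{lem:diameter} with $k=\Phi$. The standing assumption that $\log n/\log np$ is at least $2\log\log n/\log np$ away from every integer, recorded as \eqref{eq:diambounds}, gives $(np)^{\Phi}/n-\omega\log n\to\infty$ and $(np)^{\Phi-1}/n-\omega^{-1}\log n\to-\infty$ for some $\omega\to\infty$. These are strictly stronger than the hypotheses $(np)^{\Phi}/n-2\log n\to\infty$ and $(np)^{\Phi-1}/n-2\log n\to-\infty$ of Lemma~\ref{lem:diameter}, so that lemma yields $\diam G(n,p)=\Phi$ with high probability. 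Combining with the coupling gives $\diam G(n,\K,p)\geq \Phi$ with high probability, as required.

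There is essentially no obstacle in this argument: the main content is that the kernel only makes edges rarer than in $G(n,p)$, which makes the lower bound on the diameter free once the classical Erd\H{o}s--R\'enyi result is in hand. (The matching upper bound, by contrast, required the expansion estimates of Section~\ref{sec:expansion} and was the harder half of Theorem~\ref{thm:main}.)
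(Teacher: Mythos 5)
Your proof is correct and follows essentially the same route as the paper: a monotone coupling exhibiting $G(n,\K,p)$ as a spanning subgraph of $G(n,p)$, followed by an appeal to Lemma~\ref{lem:diameter}. The only (cosmetic) difference is that the paper splits off the case $p=\Theta(1)$ and handles it via the bound $\diam G(n,\K,p)\geq\Delta_u$, whereas you treat all ranges uniformly through the coupling; since the hypotheses of Lemma~\ref{lem:diameter} with $k=2$ are directly verifiable for constant $p<1$, your uniform treatment also works.
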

\begin{proof}
Note that for $p=\Theta(1)$ and $p < 1$, $\Phi=2$, $\Delta _u\geq 2$ and since $\diam G(n,\K,p) \geq \Delta_u$, we have $\diam G(n,\K,p) \geq \Phi$ in this case. Otherwise, for $p=o(1)$, using the same $X_1,\dots,X_n$, since $\K \leq 1$ we can couple $G(n,\K,p)$ so that it is a subgraph of $G(n,1,p)$. This can be done, for instance, by using uniform random variables $U_{ij} \in [0,1]$ and letting each edge be present if $U_{ij} < \K(X_i,X_j)p$ or $U_{ij} < p$. Note that $G(n,1,p)$ is nothing but $G(n,p)$. Recall that by Lemma \ref{lem:diameter} $\diam G(n,p)=\Phi$. Hence, with high probability we have $\diam G(n,\K,p) \geq \diam G(n,p)=\Phi$. 
\end{proof}

\begin{lemma}\label{lem:PhiPlusOneLB}
Suppose $\Phi \geq \Delta_u$.  If for every partition $\bA$ there exists  $i \leq j$, such that there exists no path of length exactly $\Phi$ between $\cA_i$ and $\cA_j$ in $P_u(\bA)$, then with high probability $\diam G(n,\K,p)  \geq \Phi+1$.
\end{lemma}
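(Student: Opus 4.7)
The plan is to fix a partition $\bA$ together with parts $\cA_i, \cA_j$ supplied by the hypothesis---no walk of length exactly $\Phi$ between them in $P_u(\bA)$---and, with high probability, to exhibit $u \in V(\cA_i)$ and $v \in V(\cA_j)$ with $d_{G(n,\K,p)}(u,v) \geq \Phi+1$. The argument combines a probabilistic lower bound $d_{G(n,\K,p)}(u,v) \geq \Phi$ with a deterministic exclusion $d_{G(n,\K,p)}(u,v) \neq \Phi$ coming from the walk obstruction in $P_u(\bA)$.

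For the probabilistic lower bound I will reuse the coupling $G(n,\K,p) \subseteq G(n,p)$ from Lemma \ref{lem:PhiLB}, so that distances only grow when passing to $G(n,\K,p)$. The gap assumption stated just before \eqref{eq:diambounds} gives $(np)^{\Phi-1} \leq n/(\log n)^2$ for large $n$, and hence $\E |B_{\Phi-1}^{G(n,p)}(u)| = O((np)^{\Phi-1}) = o(n)$. A Chernoff-type concentration estimate for the size of the $(\Phi-1)$-neighborhood in $G(n,p)$, followed by a union bound over vertices, then yields with high probability
\[
\max_{u \in V} |B_{\Phi-1}^{G(n,p)}(u)| \leq n/\log n.
\]
Since $|V(\cA_j)| \geq \mu(\cA_j)\, n/2$ with high probability by Chernoff (using $\mu(\cA_j) > 0$), any $u \in V(\cA_i)$ admits some $v \in V(\cA_j)$ outside $B_{\Phi-1}^{G(n,p)}(u)$, so $d_{G(n,p)}(u,v) \geq \Phi$, and by the coupling $d_{G(n,\K,p)}(u,v) \geq \Phi$.

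For the deterministic exclusion, consider any path $u = w_0, w_1, \dots, w_\Phi = v$ in $G(n,\K,p)$ and let $C_t \in \bA$ denote the part containing $w_t$. Each edge $w_t w_{t+1}$ of the path forces $\K(X_{w_t}, X_{w_{t+1}}) > 0$, and therefore $\K_u(C_t, C_{t+1}) > 0$, so the sequence $C_0, C_1, \dots, C_\Phi$ is a walk of length $\Phi$ from $\cA_i$ to $\cA_j$ in $P_u(\bA)$---contradicting the hypothesis. Hence no path of length exactly $\Phi$ connects $u$ and $v$, and combined with the probabilistic bound above we obtain $d_{G(n,\K,p)}(u,v) \geq \Phi+1$, so $\diam G(n,\K,p) \geq \Phi+1$ with high probability.

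The main obstacle is the uniform ball bound $|B_{\Phi-1}^{G(n,p)}(u)| = o(n)$ for every $u$: Markov's inequality alone handles a single vertex, but the union bound over all $n$ choices of $u$ requires an exponential tail estimate on ball sizes. This follows from a standard Chernoff/martingale argument for $G(n,p)$ in this regime, or can be obtained by adapting the proof of Lemma \ref{lem:step} to give upper tails for $|\Gamma_k|$. Everything else---the partition-to-walk translation and the concluding combination---is routine.
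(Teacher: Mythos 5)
Your proof is correct, but it reaches the key intermediate fact --- the existence of a pair $u\in V(\cA_i)$, $v\in V(\cA_j)$ with $d_{G(n,\K,p)}(u,v)\geq \Phi$ --- by a genuinely different route than the paper. The paper invokes Lemma \ref{lem:diameter} together with a sprinkling argument (adding fresh edges distributed as $G(n,\K,C'p)$) to show that $G(n,\K,p)$ contains $\omega$ disjoint pairs of vertices at distance exactly $\Phi$, and then argues that at least one such pair has its endpoints of the required types. You instead work entirely inside the coupled $G(n,p)$: the gap assumption gives $(np)^{\Phi-1}\leq n/(\log n)^2$, a uniform concentration bound on ball sizes gives $\max_u |B^{G(n,p)}_{\Phi-1}(u)|\leq n/\log n$ with high probability, and since $|V(\cA_j)|=\Theta(n)$ every $u\in V(\cA_i)$ then misses some $v\in V(\cA_j)$ within distance $\Phi-1$. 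Your route buys a cleaner and in fact stronger conclusion (every vertex of $V(\cA_i)$ works, so there is no need to argue that one of finitely many special pairs happens to land in $V(\cA_i)\times V(\cA_j)$, which is the least transparent step of the paper's proof), at the cost of one ingredient you leave as a black box: the uniform upper tail for $|B_{\Phi-1}(u)|$ over all $n$ vertices. That estimate is indeed standard and valid here --- with $np\geq\omega\log n$ each BFS level has conditional mean at least $\omega\log n$, so a Chernoff bound with relative error $\sqrt{C\log n/\mu_k}$ at level $k$ has failure probability $n^{-C/3}$ and the accumulated multiplicative error $\prod_k(1+\sqrt{C\log n/(np)^k})=1+o(1)$, exactly in the spirit of Lemma \ref{lem:step} run as an upper rather than lower bound --- so the gap is real but routine to fill. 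The deterministic exclusion step (a path of length $\Phi$ in $G(n,\K,p)$ projects, up to a probability-zero event, to a walk of length $\Phi$ in $P_u(\bA)$) is the same in both proofs; note only that the hypothesis should be read as forbidding a \emph{walk} of length $\Phi$, as in the statement of Theorem \ref{thm:main}, which is how you correctly use it.
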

\begin{proof}
Fix an arbitrary partition $\bA$ and let $i \leq j$ as in the statement of the lemma. We will show that with high probability there exist two vertices $u \in V(\cA_i)$, $v \in V(\cA_j)$ such that $v$ cannot be reached by $u$ in exactly $\Phi$ steps. Note first that with probability $1$ there is no edge between two vertices $w \in V(\cA_k)$, $x \in V(\cA_{\ell})$ with $\cA_k,\cA_{\ell} \notin E(P_u(\bA))$, so we may assume this. Note that by Lemma \ref{lem:diameter}, if $(np)^{\Phi-1} - 2n \log n \to -\infty$, then there exists with high probability a pair of vertices $u,v$ such that $d_G(u,v) \geq \Phi$. Recall that we assume that the stronger condition~\eqref{eq:diambounds} holds. In this case, observe that with high probability there exist $\omega$ disjoint pairs of vertices such that they are all at distance exactly $\Phi$: to show this we use the idea of sprinkling new edges, introduced in \cite{AKS82}, combined with the coupling using the same $X_1,\dots,X_n$ described in the previous lemma. Indeed, if there were only constantly many, by adding fresh random edges distributed as $G(n,\K,C'p)$ for some large enough constant $C'=C'(p)$, we would on the one hand have a graph that can be coupled to be a subgraph of $G(n,\K,(C'+1)p)$, while still $((C'+1)pn)^{\Phi-1} - 2n \log n \to -\infty$. On the other hand, the number of new edges added is at least as big as the ones present before in $G(n,\K,p)$, since the number of new edges added is distributed as in $G(n,\K,C'p(1-p))$, and  $C'p(1-p) > p$ for any $p < 1$ and $C'$ sufficiently large. Hence, among the graph corresponding to the new edges there would also be at most constantly many pairs of vertices at distance $\Phi$, and the probability that  after adding the new edges, one vertex pair originally at distance $\Phi$ would remain at distance $\Phi$, tends to $0$, contradicting Lemma~\ref{lem:diameter}. Hence, in $G(n,\K,p)$ with high probability there are $\omega$ disjoint pairs of vertices at distance $\Phi$, and hence, with high probability there will be at least one pair of vertices $u,v$ satisfying $u \in V(\cA_i)$ and $v \in V(\cA_j)$. Since they cannot be at distance $\Phi$, they have to be at distance at least $\Phi+1$, yielding the lemma.
\end{proof}

\begin{lemma}\label{lem:PhiPlusOneUB}
Suppose $\Phi \geq \Delta_\ell$. With high probability $\diam G(n,\K,p) \leq \Phi+1$.
\end{lemma}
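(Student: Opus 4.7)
The plan is to follow the two-sided neighborhood growth strategy of Lemma~\ref{lem:DeltaMinusUB}, but adapted to the regime $\Phi \geq \Delta_\ell$: in this regime, between any two classes of a partition attaining $\Delta_\ell$, a walk in $P_\ell(\bA)$ of length either $\Phi$ or $\Phi+1$ is always available, which is precisely what one needs in order to construct a short path in $G(n,\K,p)$ from the exponential growth of neighborhoods.

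I would fix a partition $\bA$ attaining $\Delta_\ell$ (assuming $|\bA|\ge 2$; the degenerate single-class case admits a direct coupling argument, since then $\K_\ell(\cS,\cS)>0$ and $G(n,\K,p)$ contains an Erd\H{o}s--R\'enyi subgraph to which Lemma~\ref{lem:diameter} applies). For arbitrary $u\in V(\cA_i)$ and $v\in V(\cA_j)$ set $d:=d_{P_\ell(\bA)}(\cA_i,\cA_j)\le\Delta_\ell\le\Phi$. Since $P_\ell(\bA)$ is connected (because $\lam>0$) and has at least one edge incident to $\cA_j$, I can extend any walk of length $d$ from $\cA_i$ to $\cA_j$ by zigzagging along that edge to produce a walk of any length $L\ge d$ with $L\equiv d\pmod 2$. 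I then choose $L\in\{\Phi,\Phi+1\}$ matching the parity of $d$ (exactly one such $L$ exists), write the walk as $\cA_i=\cA_0,\cA_1,\ldots,\cA_L=\cA_j$, and set $\cA_r:=\cA_{L-2}$, $\cA_s:=\cA_{L-1}$, so that both $\cA_r\cA_s$ and $\cA_s\cA_j$ are edges of $P_\ell(\bA)$.

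Next, applying Lemma~\ref{lem:path} to the walk $\cA_0,\cA_1,\ldots,\cA_{L-2}$ yields, with probability $1-o(n^{-\omega})$,
\[
|\Gamma^{L-2}(u)\cap V(\cA_r)|\ge S^{t(L-2)}=S^{\Phi-2},
\]
since $L-2\in\{\Phi-2,\Phi-1\}$ and $t(L-2)=\Phi-2$ in both cases. Similarly, $|\Gamma(v)\cap V(\cA_s)|\ge\norm{\bA}_\mu\K_\ell np/4$ with the same probability. Conditional on these two events, the probability that no edge of $G(n,\K,p)$ joins the sets $\Gamma^{L-2}(u)\cap V(\cA_r)$ and $\Gamma(v)\cap V(\cA_s)$ is at most
\[
(1-\K_\ell p)^{S^{\Phi-2}\cdot\Theta(np)}=e^{-\Omega(n^{\Phi-1}p^{\Phi})}=n^{-\omega},
\]
by assumption~\eqref{eq:diambounds}. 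Concatenating such an edge with the two short growth paths produces a walk from $u$ to $v$ of length $(L-2)+1+1=L\le\Phi+1$, and a union bound over the $O(n^2)$ pairs $u,v$ completes the proof.

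The only genuinely new step compared with Lemma~\ref{lem:DeltaMinusUB} is the parity observation that guarantees a walk of length $\Phi$ or $\Phi+1$ in $P_\ell(\bA)$ for every pair of classes; the exponential growth estimate, the no-edge tail bound, and the union bound are essentially identical to those in the earlier proof. The single-class partition is the only configuration in which this combinatorial step fails, and it is dispatched by coupling $G(n,\K,p)$ to a standard Erd\H{o}s--R\'enyi graph.
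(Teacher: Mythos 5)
Your proof is correct and follows essentially the same route as the paper: the paper's version simply asserts the existence of a walk of length $\Phi-2$ or $\Phi-1$ from $\cA_i$ to a class $\cA_r$ adjacent-at-distance-two to $\cA_j$ and then refers back to the argument of Lemma~\ref{lem:DeltaMinusUB}, whereas you spell out the parity/zigzag justification for that walk (and the single-class edge case) explicitly. No substantive difference in approach.
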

\begin{proof}
As in Lemma~\ref{lem:DeltaMinusUB}, consider a partition $\bA$ attaining $\Delta_\ell$, and consider two arbitrary vertices $u,v \in V(\cA_i), V(\cA_j)$. Observe that in $P_\ell(\bA)$ there exists $\cA_r \in \bA$ such that $\cA_r,\cA_s,\cA_j$ is a walk (possibly reusing partitions and making zigzags) of length $2$ in $P_\ell(\bA)$, and such that there exists a walk (possibly reusing partitions and making zigzags) of length $\Phi-2$ or $\Phi-1$ from $\cA_i$ to $\cA_r$. The proof is then as is in Lemma~\ref{lem:DeltaMinusUB}.
\end{proof}

\begin{lemma}\label{lem:PhiUB}
Suppose $\Phi \geq \Delta_\ell$. If there exists a partition $\bA$ such that for any $i \leq j$, there exists a path of length exactly $\Phi$ between $\cA_i$ and $\cA_j$ in $P_\ell(\bA)$, then with high probability $\diam G(n,\K,p) \leq \Phi$.
\end{lemma}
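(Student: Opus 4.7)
The plan is to imitate the argument of Lemma~\ref{lem:DeltaMinusUB} almost verbatim, with the only change being that we exploit the hypothesis to produce a walk of length \emph{exactly} $\Phi$ (rather than length $\le \Delta_\ell$) and therefore end up with a path in $G(n,\K,p)$ of length exactly $\Phi$. Fix a partition $\bA$ with the stated property, fix two vertices $u,v$ with $X_u\in\cA_i$, $X_v\in\cA_j$, and let $\cA_i=\cA_0,\cA_1,\ldots,\cA_{\Phi}=\cA_j$ be a walk of length exactly $\Phi$ in $P_\ell(\bA)$. The goal is to show that, with probability $1-o(n^{-\omega})$, there is a path of length $\Phi$ from $u$ to $v$ whose $k$-th vertex lies in $V(\cA_k)$.

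First I would grow a BFS neighborhood from $u$ along the prefix $\cA_0,\cA_1,\ldots,\cA_{\Phi-2}$. By Lemma~\ref{lem:path}, with probability $1-o(n^{-\omega})$,
\[
|\Gamma^{\Phi-2}(u)\cap V(\cA_{\Phi-2})|\ \ge\ S^{\Phi-2}.
\]
Similarly, one step from $v$ along $\cA_\Phi\to\cA_{\Phi-1}$ gives, with probability $1-o(n^{-\omega})$,
\[
|\Gamma(v)\cap V(\cA_{\Phi-1})|\ \ge\ S.
\]
Since these two exposures only probe edges incident to layers $\cA_0,\dots,\cA_{\Phi-2}$ on one side and edges incident to $v$ on the other, the presence of edges between $V(\cA_{\Phi-2})$ and $V(\cA_{\Phi-1})$ (with neither endpoint being $v$) is still untouched.

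Next, condition on both lower bounds deterministically and expose the remaining cross-edges. As $(\cA_{\Phi-2},\cA_{\Phi-1})$ is an edge of $P_\ell(\bA)$ we have $\K_\ell(\cA_{\Phi-2},\cA_{\Phi-1})\ge\K_\ell>0$, so the probability that no edge at all appears between the two sets is at most
\[
(1-\K_\ell p)^{S^{\Phi-1}}\ =\ \exp\bigl(-\Omega((np)^{\Phi}/n)\bigr)\ =\ n^{-\omega},
\]
where the final equality uses \eqref{eq:diambounds}. Any such cross-edge $xy$ with $x\in V(\cA_{\Phi-2})$ and $y\in V(\cA_{\Phi-1})$ yields a $u$–$v$ path of length $(\Phi-2)+1+1=\Phi$. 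A union bound over the $\binom{n}{2}$ pairs of vertices gives $\diam G(n,\K,p)\le\Phi$ with high probability.

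The only delicate point, as in Lemma~\ref{lem:DeltaMinusUB}, is the independence between the two BFS explorations and the final sprinkled edges. This is not really an obstacle here because the two exposures probe disjoint edge sets by construction (one grows inside layers $0,\dots,\Phi-2$, the other reveals only edges incident to $v$), so the cross-edges between $\Gamma^{\Phi-2}(u)\cap V(\cA_{\Phi-2})$ and $\Gamma(v)\cap V(\cA_{\Phi-1})\setminus\{v\}$ remain genuinely random after conditioning, and the calculation above is justified exactly as in the previous lemmas.
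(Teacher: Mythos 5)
Your proposal follows essentially the same route as the paper: decompose the length-$\Phi$ walk into a prefix of length $\Phi-2$ grown from $u$ via Lemma~\ref{lem:path}, one step grown from $v$, and a final sprinkled edge between the two resulting sets, with the probability bound $(1-\K_\ell p)^{S^{\Phi-1}}=n^{-\omega}$ coming from \eqref{eq:diambounds} and a union bound over vertex pairs. The paper simply phrases this as ``the proof is as in Lemma~\ref{lem:DeltaMinusUB}''; your write-up fills in the same details, so it is correct and not a different argument.
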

\begin{proof}
Consider such a partition $\bA$ and consider two vertices $u,v \in V(\cA_i), V(\cA_j)$:  by assumption, in $P_\ell(\bA)$ there exists $\cA_r \in \bA$ such that $\cA_r,\cA_s,\cA_j$ is a walk (possibly reusing partitions and making zigzags) of length $2$ in $P_\ell(\bA)$, and such that there exists a walk (possibly reusing partitions and making zigzags) of length exactly $\Phi-2$ from $\cA_i$ to $\cA_r$. The proof is then as is in Lemma~\ref{lem:DeltaMinusUB}.
\end{proof}

Finally, combining all the lemmas above we obtain the proof of the three statements in Theorem \ref{thm:main}.

\begin{proof}[\bf Proof of Theorem \ref{thm:main}]
 By combining Lemma~\ref{lem:DeltaPlusLB} and~\ref{lem:DeltaMinusUB} (i) follows. The first part of (ii) follows by Lemma~\ref{lem:PhiLB} and Lemma~\ref{lem:DeltaMinusUB}, and the second part follows by adding Lemma~\ref{lem:PhiPlusOneLB}. Finally, the first part of (iii) follows by Lemma~\ref{lem:PhiLB} and~\ref{lem:PhiPlusOneUB}; for the second part Lemma~\ref{lem:PhiPlusOneLB} and Lemma~\ref{lem:PhiUB} is used.
\end{proof}

\subsection*{Acknowledgements}
The work of the first author was made possible through generous support by NSERC. Part of this work was done at the Centre de Recerca Matem\`atica (CRM) in Barcelona. The authors are very grateful for its hospitality. The second author would also like to thank the Center for Mathematical Modeling (CMM, CNRS UMI 2807) at Universidad de Chile. This work was finished during his visit.


\bibliographystyle{amsplain}
\bibliography{diameter}

\end{document}